\documentclass[12pt]{amsart} 
\usepackage{latexsym,enumerate,amscd,amssymb, 
hyperref}
\usepackage[T1]{fontenc}

\usepackage{fullpage}
\usepackage[nocolor]{sseq}
\usepackage[textwidth=1.2in, textsize=small]{todonotes}
\usepackage{tikz, tikz-cd}
\usetikzlibrary{arrows}
\usetikzlibrary{matrix}
\usetikzlibrary{shapes}
\usetikzlibrary{calc}
\usepgflibrary{shapes.geometric}
\usepackage{thmtools} 
\usepackage{thm-restate}
\usepackage{mathabx}

\theoremstyle{plain} 
\newtheorem{Theorem}{Theorem}[section]
\newtheorem{Corollary}[Theorem]{Corollary} 
\newtheorem{Lemma}[Theorem]{Lemma}

\theoremstyle{definition} 
\newtheorem{Definition}[Theorem]{Definition} 
\newtheorem{Question}[Theorem]{Question}
\newtheorem{Remark}[Theorem]{Remark} 
\newtheorem{example}[Theorem]{Example}
\newtheorem{obs}[Theorem]{Observation}

\numberwithin{equation}{section}
\numberwithin{figure}{section}

\newcommand{\G}{\Gamma}
\newcommand{\gG}{\Gamma}

\newcommand{\Conf}[2]{\mathrm{Conf}^\square_{#1}({#2})} 

\newcommand{\cP}{\mathcal P}

\DeclareMathOperator{\Lk}{Lk}

\DeclareMathOperator{\cConf}{Conf}

\title{The algebraic structure of hyperbolic graph braid groups}
\author{B.~Appiah, P.~Dani, W.~Ge, C.~Hudson, S.~Jain, M.~Lemoine, J.~Murphy, J.~Murray, A.~Pandikkadan, K.~Schreve, and H.~Vo}
\thanks{This material is based upon work supported by the National Science Foundation under Award No.~2231492 and Award No.~2203325}

\begin{document}\begin{abstract}
Genevois  recently classified which graph braid groups on $\ge 3$ strands are word hyperbolic. 
In the $3$-strand case, he asked whether all such word hyperbolic groups are actually free;  this reduced to checking two infinite classes of graphs: \emph{sun} and \emph{pulsar} graphs. We prove that $3$-strand braid groups of sun graphs are free.  On the other hand, it was known to experts that $3$-strand braid groups of most pulsar graphs contain surface subgroups.  We provide a simple proof of this and prove an additional structure theorem for these groups.

\end{abstract}

\maketitle
\section{Introduction}
 
Let $\gG$ be a finite graph. The main object of study in this paper is the configuration space of $n$ unordered points on $\gG$; $$\cConf_n(\gG) := (\gG^n - \Delta)/S_n$$ where $\Delta$ is the diagonal in $\gG^n$ and the $S_n$-action permutes the factors. The fundamental group of $\cConf_n(\gG)$ is the so-called \emph{$n$-strand graph braid group}, which we will denote by $B_n(\gG)$. 

The algebraic structure of graph braid groups has been studied from various viewpoints, see for example~\cite{abrams-ghrist, farley-sabalka2, Ko-Park}. 
Recently, Genevois~\cite{Genevois}, and later Berlyne~\cite{berlyne}  studied these groups from the perspective of geometric group theory, and we continue in this vein. 
In fact, these groups all come equipped with natural well-behaved geometric structures. 
Abrams and \'{S}wi\k{a}tkowski
 independently showed that $\cConf_n(\gG)$ deformation retracts onto a locally CAT(0) cube complex~\cite{Abrams, Swiat}. Crisp and Wiest later showed that this cube complex admits a local isometry to the Salvetti complex of a right-angled Artin group, i.e. in modern language, graph braid groups are compact special~\cite{crisp-wiest}. 

\subsection{A discretized version}

A discretized analogue of $\cConf_n(\gG)$ is not hard to describe; one simply removes a larger neighborhood of the diagonal. To be precise, if $(e_1, \dots e_n)$ denotes an $n$-tuple of cells in $\gG$, define $\Delta^\square$ as  $$ \{ (e_1, e_2, \dots e_n) \;|\; e_i \cap e_j \ne \varnothing \text{ for some } i \ne j\},$$ and let $$\Conf{n}{\gG} = (\gG^n - \Delta^\square)/S_n$$ 

Then $\Conf{n}{\gG}$ is naturally a cube complex; $k$-cubes in $\Conf{n}{\gG}$ can be identified with a choice of $k$ disjoint edges in $\gG$ along with $n-k$ disjoint vertices. 
If one isn't careful, $\Conf{n}{\gG}$ has no relation to $\cConf_n(\gG)$; for example if $n$ is greater than the number of vertices of $\gG$ then $\Conf{n}{\gG}$ is empty. 
On the other hand, Abrams showed that a suitable subdivision of $\gG$ always leads to a $\Conf{n}{\gG}$ that is a deformation retract of $\cConf_n(\gG)$.   It will be convenient  for us to subdivide $\Gamma$ as little as possible, so we will use a theorem of Prue and Scrimshaw~\cite[Thereom 3.2]{prue-scrimshaw} which says that if $\Gamma$ is a graph with at least $n$ vertices, then $\cConf_n(\gG)$ deformation retracts to $\Conf{n}{\gG}$ provided each path between distinct vertices of degree $\ne 2$ has length at least $n-1$ and each homotopically essential loop has length at least $n+1$.   We call this an \emph{admissible} subdivision.

It follows quickly from Gromov's link condition that $\Conf{n}{\gG}$ is locally CAT(0). Indeed, 
a $(k+1)$-clique (i.e.~complete subgraph with $k+1$ vertices) 
in the link of a vertex $\{v_1, \dots, v_n\}$ of $\Conf{n}{\gG}$ corresponds to $(k+1)$-pairwise disjoint edges in $\gG$, where each edge  is incident to one of the $v_i$. Since the edges are pairwise disjoint, they correspond to a $(k+1)$-cube in $\Conf{n}{\gG}$ containing $\{v_1, \dots, v_n\}$, and this cube fills the $(k+1)$-clique with a $k$-simplex, hence all links are flag complexes. 

\subsection{Hyperbolic graph braid groups}

Genevois analyzed when graph braid groups are word hyperbolic groups in the sense of Gromov. This turns out to be a very rare occurence. In particular he showed:

\begin{Theorem}\cite[Theorem 1.1]{Genevois}
Let $\gG$ be a finite graph. Then 

\begin{itemize}
\item $B_2(\gG)$ is hyperbolic if and only if $\gG$ does not contain a pair of disjoint cycles. 
\item $B_3(\gG)$ is hyperbolic if and only if $\gG$ is a tree, a rose graph, a sun graph, or a pulsar graph. 
\item For $n \ge 4$, $B_n(\gG)$ is hyperbolic if and only if $\gG$ is a rose graph. 
\end{itemize}
\end{Theorem}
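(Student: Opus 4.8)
The plan is to study $B_n(\gG)$ through its geometric action on the CAT(0) cube complex $X = \widetilde{\Conf{n}{\gG}}$, where we first pass to an admissible subdivision so that $\Conf{n}{\gG} \simeq \cConf_n(\gG)$ and the preceding link computation guarantees local CAT(0)-ness. Since this action is proper and cocompact, Bridson's flat plane theorem reduces hyperbolicity to the absence of an isometrically embedded flat $\mathbb{E}^2$; and because $X$ is a cocompact CAT(0) cube complex, such a flat can be taken periodic, so that it suffices to show $B_n(\gG)$ is hyperbolic \emph{if and only if} it contains no $\BZ^2$ subgroup. For each of the three ranges of $n$ I would then prove two implications: a \emph{constructive} direction producing a $\BZ^2$ when $\gG$ carries the forbidden structure, and a \emph{rigidity} direction showing that the listed graphs admit no flat.

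The constructive direction is the more elementary one. If $\gG$ contains two disjoint subgraphs $A$ and $B$ together with a partition $n = a + b$ with $a,b \ge 1$ for which both $B_a(A)$ and $B_b(B)$ are infinite, then the configurations confining $a$ strands to $A$ and $b$ strands to $B$ span a subcomplex $\cong \Conf{a}{A} \times \Conf{b}{B}$, giving a product $B_a(A) \times B_b(B) \hookrightarrow B_n(\gG)$ and hence a $\BZ^2$. An infinite-order motion in a factor arises in one of two ways: a single strand traverses a cycle, or two strands exchange across a vertex of degree $\ge 3$ (the ``railroad switch'' $B_2(Y) \cong \BZ$). This already settles the easy halves: a pair of disjoint cycles yields $\BZ^2$ for $n = 2$, and more generally any two disjoint ``features'' (cycles or branch vertices) that can each be fed enough strands produce $\BZ^2$.

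The rigidity direction is the crux. Here I would argue that \emph{every} flat in $X$ must factor through disjoint supports in $\gG$. The mechanism is hyperplane combinatorics: a hyperplane of $\Conf{n}{\gG}$ records a single strand crossing a single edge while the remaining strands are parked, and two hyperplanes cross precisely when the two edges together with the parked configurations are disjoint. A flat determines two transverse infinite families of hyperplanes, and the key lemma is that these families can be separated so that the edges appearing in one direction are disjoint from those appearing in the other. Granting this, the two directions of the flat are supported on disjoint subgraphs $A, B$ carrying infinite motions for some partition $n = a+b$ --- exactly the data produced in the constructive step. Thus if $\gG$ admits no such data, $X$ has no flat and $B_n(\gG)$ is hyperbolic. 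Establishing this separation lemma, by controlling how the parked and moving strands interact along a two-dimensional grid of squares, is where the real work lies and is the main obstacle.

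Finally I would convert ``$\gG$ carries no disjoint infinite-motion data for any partition of $n$'' into the stated lists by a finite case analysis on the number of available strands. For $n = 2$ the only split of the strands is $1+1$, each factor requiring a cycle, so the condition is exactly the absence of two disjoint cycles. For $n \ge 4$ one has enough strands that any graph other than a rose contains two disjoint features that can each be activated (e.g.\ via a $2+2$ split), forcing $\BZ^2$; conversely a rose has all its cycles through a single vertex and admits no disjoint activation, so it remains hyperbolic (indeed free). The delicate middle case $n = 3$ is where three strands are too few to realize two simultaneous motions in all but a short list of graphs; verifying that this list is precisely trees, roses, sun graphs, and pulsar graphs --- the sun and pulsar definitions being tailored to capture exactly this borderline --- is the most intricate part of the casework, and is the portion that the remainder of this paper analyzes for the two surviving families.
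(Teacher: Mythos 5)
This theorem is not proved in the paper at all: it is quoted from Genevois with a citation, and the only portion the paper discusses is the easy direction --- the explicit $\mathbb{Z}^2$ subgroups described in the introduction (disjoint cycles for $n=2$, a cycle disjoint from a branch vertex for $n=3$, two disjoint branch vertices for $n\ge 4$). Your ``constructive direction'' reproduces that part correctly, and your overall strategy (reduce hyperbolicity to the absence of $\mathbb{Z}^2$ using the cube-complex structure, then characterize $\mathbb{Z}^2$ subgroups by disjoint supports in $\gG$) is indeed in the spirit of what Genevois does. But since the paper defers the proof entirely to \cite{Genevois}, the only thing to assess is whether your outline stands on its own, and it does not.

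The gaps are the ones you yourself flag, and they are not minor: they are the entire content of the theorem. First, the ``separation lemma'' --- that any flat in $\Conf{n}{\gG}$ comes from two disjoint subgraphs $A,B$ and a partition $n=a+b$ with both factors carrying infinite motion --- is asserted via a plausible hyperplane heuristic and then declared ``where the real work lies.'' Controlling how parked strands interact across a grid of squares is precisely the hard technical core (one must rule out, e.g., flats whose two directions reuse overlapping edges with different parked configurations), and nothing in your sketch does this. Second, converting ``no disjoint infinite-motion data'' into the stated lists is a genuine combinatorial classification, especially for $n=3$, where one must show that any graph that is not a tree, rose, sun, or pulsar contains (after admissible subdivision) a cycle disjoint from a branch vertex or two disjoint cycles; you leave this as ``the most intricate part of the casework.'' A proposal that defers both of its central steps is a roadmap, not a proof. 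Finally, one step is overstated: passing from Bridson's flat plane theorem to ``the flat can be taken periodic, hence hyperbolicity is equivalent to having no $\mathbb{Z}^2$'' is not a consequence of cocompactness --- for general CAT(0) groups this is the open Flat Closing Conjecture. For cocompact CAT(0) cube complexes (or cocompact special groups, which is the setting Genevois actually exploits via Crisp--Wiest) it is a theorem, resting on rank rigidity of Caprace--Sageev or on Genevois's own hyperbolicity criteria, and it must be invoked as such rather than derived in a clause.
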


Here, a \emph{rose graph} is a wedge of circles and rays, a \emph{sun graph} is obtained from a cycle by attaching rays to some vertices (see Figure~\ref{SunGraph} and Definition~\ref{def:sun}), and a \emph{pulsar graph} is obtained by gluing cycles along a fixed segment  in each, and attaching rays to the endpoints of this segment (see Figure~\ref{fig:theta_m} and Definition~\ref{def:pulsar}).  
A special case of a pulsar graph is the \emph{generalized theta graph} $\Theta_m$, which is the suspension of $m$ points. 

In each case, the non-hyperbolicity of $B_n(\gG)$ comes from obvious free abelian subgroups. If $\gG$ has two disjoint cycles $C_1$ and $C_2$, then $B_2(\gG)$ contains the subgroup $\mathbb{Z}^2 \cong \pi_1(C_1 \times C_2)$.
If $\gG$ has a cycle that is disjoint from a vertex $v$ of degree $>2$, then $B_3(\gG)$ contains a $\mathbb{Z}^2$-subgroup corresponding to one particle moving around the cycle and two particles swapping places around $v$. Finally, if $\gG$ contains two vertices of degree $>2$, then the $\mathbb{Z}^2$-subgroup of $B_4(\gG)$ comes from swapping disjoint pairs of particles around the two vertices. 

Genevois showed that if $\gG$ is a rose graph, then $B_n(\gG)$ is free for all $n$.  It is known that $B_3(\gG)$ is free when $\gG$ is a tree~\cite{farley-sabalka}.  
Motivated by this, Genevois asked about the the other cases when $B_3(\gG)$ is hyperbolic: 

\begin{Question}\label{q:Genevois} \cite[Question 5.3]{Genevois}
Are the $3$-strand graph braid groups of sun graphs and pulsar graphs free?   one-ended? surface groups? 3-manifold groups? 
\end{Question}

Our first theorem confirms that the 3-strand  braid groups of sun graphs are indeed free:

\begin{Theorem} \label{t:sun}
If $\mathcal S$ is a sun graph, then $B_3(\mathcal S)$ is free.
\end{Theorem}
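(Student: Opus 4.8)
The goal is to show that the nonpositively curved cube complex $\Conf{3}{\cS}$ is homotopy equivalent to a graph; since $B_3(\cS) = \pi_1(\Conf{3}{\cS})$, this forces the group to be free. My plan is to run discrete Morse theory in the style of Farley--Sabalka~\cite{farley-sabalka2}, adapted to the single cycle of a sun graph, and to show that there are no critical cells above dimension one.

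First I would fix an admissible subdivision of $\cS$, so that by Prue--Scrimshaw the complex $\Conf{3}{\cS}$ deformation retracts from $\cConf_3(\cS)$; for $n = 3$ this only asks that paths between branch vertices have length $\ge 2$ and that the cycle have length $\ge 4$. Let $C$ be the unique embedded cycle and record the defining property of a sun graph that I will exploit throughout: every vertex of degree $\ge 3$ is a ray-attachment point and hence lies on $C$. I would then pick a spanning tree $T$ by deleting a single edge $e_0 \subset C$, root and orient $T$ by a depth-first search of the (subdivided) vertices, and use this ordering to build the associated discrete Morse function on $\Conf{3}{\cS}$, whose cells are recorded by triples of pairwise disjoint edges-or-vertices.

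The heart of the argument is the classification of critical cells, and the claim I would prove is that none occur in dimension $\ge 2$. A critical $2$-cell consists of two disjoint edges, each carrying an essential move of the strands---either a swap of two strands at a branch vertex or a traversal of the deleted edge $e_0$ around $C$---together with a third, blocking strand. The sun condition obstructs realizing two such moves disjointly: a swap site is a branch vertex and therefore lies on $C$, so a strand sent around $C$ across $e_0$ cannot avoid the cells adjacent to that vertex, while two distinct branch vertices cannot simultaneously host independent critical swaps once only three strands are available. A critical $3$-cell is even more constrained and is excluded the same way. With critical cells confined to dimensions $0$ and $1$, the Morse complex is $1$-dimensional, so $\Conf{3}{\cS}$ collapses to a wedge of circles and $B_3(\cS)$ is free, of rank equal to the number of critical $1$-cells.

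I expect the classification of critical cells to be the main obstacle, for two reasons. First, Farley--Sabalka's machinery is written for trees, so I must check that the bookkeeping of \emph{blocked} and \emph{disrespectful} cells behaves correctly across the deleted edge $e_0$ and that $e_0$ generates only critical $1$-cells, namely the loop classes coming from circling $C$. Second, and more essentially, ruling out critical $2$-cells is exactly the statement that the $\mathbb{Z}^2$ subgroup built from \emph{one strand circling a cycle while two strands swap at a disjoint branch vertex}---the very obstruction to hyperbolicity that Genevois's theorem removes for sun graphs---never appears. Converting that disjointness failure into the nonexistence of critical cells of dimension $\ge 2$ is the crux of the proof.
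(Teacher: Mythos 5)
Your central claim --- that the Farley--Sabalka Morse function on $\Conf{3}{\cS}$ has no critical cells of dimension $\ge 2$ --- is false for sun graphs with at least two rays, and the heuristic you offer for it rests on a conflation. In discrete Morse theory, criticality of a cell is a purely local condition: for $3$ strands, a $2$-cell $\{e, e', u\}$ is critical when each edge fails to be order-respecting (the deleted edge $e_0$ fails \emph{automatically}, since it is not in the tree) and the vertex $u$ is blocked. Nothing in this condition asks that the whole loop around $C$ be disjoint from a swap at a branch vertex; it only asks that the single edge $e_0$ be disjoint from it, which is easy to arrange on a cycle of length $\ge 6$. Concretely, let $\cS$ be the hexagon $v_1 y_1 w v_2 z y_2$ with rays $v_1 p_1 p_2$ and $v_2 q_1 q_2$ (an admissible subdivision), root the tree at $p_2$, and run the DFS so that the deleted edge is $e_0 = \{y_2, v_1\}$ and, at $v_2$, the ray direction $q_1$ is numbered $1$ and the cycle direction $z$ is numbered $2$. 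Then the $2$-cell
$$\sigma = \bigl\{\, \{y_2, v_1\},\ \{v_2, z\},\ q_1 \,\bigr\}$$
is critical: $\{y_2,v_1\}$ is deleted; $\{v_2,z\}$ is disrespectful because $q_1$ occupies the direction-$1$ neighbor of $\tau(\{v_2,z\}) = v_2$; and $q_1$ is blocked because its parent $v_2$ is an endpoint of $\{v_2,z\}$. All three cells are pairwise disjoint, so $\sigma$ is a genuine square of $\Conf{3}{\cS}$. Reordering the DFS at $v_2$ or moving $e_0$ elsewhere on $C$ (it must lie on $C$, since the rays are bridges) only relocates the problem to whichever essential vertex is far from $e_0$. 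Your correct observation that two branch vertices cannot host simultaneous swaps with only three strands is beside the point, because the deleted edge needs no witness. This is exactly the gap between the two notions you identify as "the crux": the $\mathbb{Z}^2$-obstruction requires the \emph{entire cycle} to miss a branch vertex (which sun graphs indeed forbid), whereas a critical $2$-cell only requires the single edge $e_0$ to miss a local swap configuration (which sun graphs allow).

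The group is of course still free, so these critical $2$-cells must die in the Morse complex; but that means your approach cannot stop at counting critical cells --- you would have to compute the Morse differential and show the $2$-cells can be cancelled, which is a substantially harder bookkeeping problem (and is where work on tree braid groups with $n \ge 4$ strands gets difficult). Incidentally, your stated worry is misplaced: Farley--Sabalka's machinery is set up for arbitrary graphs with a chosen spanning tree, so "extending it across $e_0$" is not the issue; the false classification of critical cells is. The paper avoids all of this by taking a different route: induction on the number of rays, using Bestvina--Brady PL Morse theory. A height function is chosen so that the two vertices of a newly attached ray sit above everything else, and one checks that the descending links of the finitely many configurations involving those new vertices are disjoint unions of contractible pieces; Corollary~\ref{c:freeproduct} then gives $B_3(\Gamma') \cong B_3(\Gamma) \ast \mathbb{F}$, and freeness follows from the base case of a cycle. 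The inductive structure is what keeps every descending-link computation local and cone-like, with no global critical-cell classification ever needed.
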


On the other hand, it turns out that the $3$-strand  graph braid group of a pulsar graph is almost never free. 
In fact it was already known that $\cConf_3(\Theta_4)$ is homotopy equivalent to a closed, orientable surface of genus $3$~\cite{SL, W-G}. 
We provide a simple proof of this: we show that with the simplest admissible subdivision of $\Theta_4$, the space $\Conf{3}{\Theta_4}$ is \emph{homeomorphic} to this surface. 
Since   $B_3(\Theta_4)$ injects into $B_3(\cP_{m, n_1, n_2})$ by~\cite[Corollary 3.14]{Abrams}, it follows that $B_3(\cP_{m, n_1, n_2})$ is not free for $m \ge 4$. 

In this case, we still prove some additional structure for the 3-strand graph braid groups of pulsar graphs: 
\begin{Theorem} \label{t:split}
If $\cP$ is a pulsar graph constructed by attaching rays to the generalized theta graph $\Theta_m$, then $B_3(\cP) = B_3(\Theta_m) \ast \mathbb{F}$, where $\mathbb{F}$ is a non-trivial free group of finite rank.  
\end{Theorem}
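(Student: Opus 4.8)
The plan is to model $B_3(\cP)$ and $B_3(\Theta_m)$ by the nonpositively curved cube complexes $\Conf{3}{\cP}$ and $\Conf{3}{\Theta_m}$ and to collapse the former onto the latter using only cells of dimension at most one. Choose an admissible subdivision of $\cP$ whose restriction to $\Theta_m$ is again admissible, so that $\pi_1\Conf{3}{\cP} = B_3(\cP)$ while the full subcomplex $Y := \Conf{3}{\Theta_m}$ spanned by the configurations supported in $\Theta_m$ has $\pi_1 Y = B_3(\Theta_m)$; the inclusion $Y \hookrightarrow \Conf{3}{\cP}$ is $\pi_1$-injective by \cite[Corollary 3.14]{Abrams}, and both complexes are connected. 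The engine of the argument is the elementary observation that if a connected CW complex $X$ is obtained from a connected subcomplex $Y$ by attaching only $0$- and $1$-cells, then $\pi_1 X \cong \pi_1 Y \ast \mathbb{F}$ with $\mathbb{F}$ free: each attached $1$-cell either merges two components or creates a free loop, so connectedness of $Y$ and $X$ gives $X \simeq Y \vee \bigvee S^1$. It therefore suffices to collapse $\Conf{3}{\cP}$ onto a subcomplex $X \supseteq Y$ all of whose cells of dimension $\ge 2$ lie in $Y$.

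To build this collapse I would apply the discrete Morse theory of Farley and Sabalka~\cite{farley-sabalka, farley-sabalka2}. Root the spanning tree at a vertex of $\Theta_m$ and order the vertices of $\cP$ so that every vertex of $\Theta_m$ precedes every interior vertex of a ray, with each ray oriented away from its pole. The associated gradient vector field only ever pushes particles toward the root, so it never carries a configuration supported in $\Theta_m$ out onto a ray; consequently the matching restricts to the Farley--Sabalka matching of $Y$, the critical cells supported in $\Theta_m$ are precisely those of $Y$, and the Morse differential of such a cell stays inside $Y$. Hence the critical subcomplex $X$ onto which $\Conf{3}{\cP}$ collapses contains the critical subcomplex $X_Y \simeq Y$ of $Y$, and --- crucially --- the critical $2$-cells lying in $Y$ carry exactly the defining relations of $B_3(\Theta_m)$ expressed in the generators of $X_Y$.

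The crux, and the step I expect to be the main obstacle, is to show that no critical cell involving a ray has dimension $2$ (equivalently, that every critical cell supported partly on a ray is a $0$- or $1$-cell). This is a finite verification of the Farley--Sabalka criticality condition that exploits the absence of branching along a ray: interior ray vertices have degree $2$ and a tip has degree $1$, so a moving edge on a ray can only be ``blocked'' in the single root-ward direction and can never supply a second independent blocked, order-disrespecting edge. Thus whenever a square has a moving edge on a ray, that edge furnishes a free matching partner, so the square is non-critical. Granting this (and noting that any stray critical cell of dimension $\ge 3$ off $Y$ is irrelevant once we pass to the $2$-skeleton), $X$ is obtained from $Y$ by attaching only $0$- and $1$-cells, and the engine observation yields $B_3(\cP) \cong B_3(\Theta_m) \ast \mathbb{F}$ with $\mathbb{F}$ free.

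Finally, $\mathbb{F}$ has finite rank because the complex is finite, and it is nontrivial: since the admissible subdivision gives each ray positive length, there is at least one surviving critical $1$-cell supported on a ray, so $\rank \mathbb{F} \ge 1$. This can be confirmed numerically --- because $\Conf{3}{\cP}$ and $\Conf{3}{\Theta_m}$ are aspherical, a free-product Euler characteristic computation gives $\rank \mathbb{F} = \chi(\Conf{3}{\Theta_m}) - \chi(\Conf{3}{\cP})$, whose positivity follows from a direct cell count after attaching the rays.
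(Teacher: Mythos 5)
Your strategy (a Morse-theoretic collapse of $\Conf{3}{\cP}$ onto $\Conf{3}{\Theta_m}$ together with cells of dimension at most one) is the same as the paper's in spirit, but your crux claim is false, and it fails in precisely the case your degree argument does not address: squares whose two \emph{moving} edges both lie in $\Theta_m$ while the third, \emph{stationary}, particle sits on a ray. After the admissible subdivision, every edge of $\Theta_m$ is incident to $a_1$ or $a_2$, so for any two disjoint edges $e_1,e_2\subset\Theta_m$ one of them contains $a_1$; consequently, in any square $\{e_1,e_2,c_i\}$ the ray particle at $c_i$ is automatically blocked, since its rootward tree edge ends at $a_1$ (recall your root lies in $\Theta_m$). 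It then suffices to pick $e_1,e_2$ unmatched, and the same pairs that produce the critical squares of $\Conf{3}{\Theta_m}$ work. Concretely, for $m\ge 3$, take the spanning tree consisting of all edges $[a_2,b_j]$, the edge $[b_1,a_1]$, and the ray edges, rooted at $a_2$ with the depth-first order that visits the rays last (so $a_2=0$, $b_2=1,\dots,b_m=m-1$, $b_1=m$, $a_1=m+1$, then the ray vertices). The deleted edges are $[b_j,a_1]$ for $j\ge 2$, and the square
$$\{\,[b_2,a_1],\ [a_2,b_3],\ c_i\,\}$$
is critical in the Farley--Sabalka matching: $[b_2,a_1]$ is deleted, $[a_2,b_3]$ is order-disrespecting because the occupied vertex $b_2$ is adjacent to $a_2$ and has smaller order than $b_3$, and $c_i$ is blocked because $a_1$ is occupied. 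This is a critical $2$-cell not supported in $\Theta_m$, so the critical complex is \emph{not} obtained from $Y$ by attaching only $0$- and $1$-cells, and your engine observation cannot be invoked. The mechanism is robust: rerooting or reordering within your constraints moves the deleted edges around, but blocking a particle at $c_i$ requires only that $a_1$ be covered by one of the two edges, which is automatic; so these squares persist for every admissible choice. Repairing the argument would require computing the Morse boundary maps of these extra critical squares and showing that the relations they impose are consequences of the ones carried by the critical squares of $Y$ --- genuinely new work, not a finite verification of the criticality condition.

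This is exactly the point where the paper's choice of machinery does real work. The paper uses Bestvina--Brady PL Morse theory with a height function that puts the ray vertices strictly above everything else ($h(c_i)=5$, $h(c_i')=6$, while $h\le 2$ on $\Theta_m$), so that the sublevel set $f^{-1}([0,4])$ is exactly $\Conf{3}{\Theta_m}$; one then checks that every configuration containing a ray vertex has descending link a disjoint union of contractible spaces (a cone, two points, $m$ points, etc.). The problematic configurations above appear there as vertices of the form $\{c_i,a_1,w\}$, whose descending links are finite sets of points: in the PL sweep they contribute only wedge summands of circles, never $2$-dimensional critical data. If you want to keep the discrete-Morse route, you need an analogous mechanism for disposing of the critical squares exhibited above; as written, your proof has a genuine gap at its central step.
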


This still has implications for Question~\ref{q:Genevois}, as it shows that $B_3(\cP)$ is not one-ended when $\cP$ has a non-trivial ray, i.e.~when it is not a generalized theta graph.

Next, we compute the Euler characteristics of the configurations spaces  
$\cConf_3(\Theta_m)$:

\begin{Theorem}
$\chi(\cConf_3(\Theta_m))=
\frac{m(m-2)(m-7)}{6}.$
\end{Theorem}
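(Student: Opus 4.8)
The plan is to compute $\chi$ directly as the alternating sum of the numbers of cubes in the cube complex $\Conf{3}{\Theta_m}$. Since $\chi$ is a homotopy invariant and $\Conf{3}{\Theta_m}$ is a deformation retract of $\cConf_3(\Theta_m)$ once we pass to an admissible subdivision, it suffices to count cubes of each dimension in such a complex.

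First I would fix the coarsest admissible subdivision of $\Theta_m$. The only vertices of degree $\ne 2$ are the two poles $a$ and $b$, each of degree $m$, and the homotopically essential loops are exactly the unions of two distinct arcs. Inserting a single midpoint $c_i$ into each of the $m$ arcs makes every $a$-to-$b$ path have length $2 = n-1$ and every essential loop have length $4 = n+1$, so the Prue--Scrimshaw conditions are met for $n = 3$. Write $G$ for the resulting graph: it has $m+2$ vertices and $2m$ edges, where arc $i$ contributes the edges $\alpha_i = \{a, c_i\}$ and $\beta_i = \{c_i, b\}$.

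Next I would count cubes, using that a $k$-cube is a choice of $k$ pairwise-disjoint edges together with $3-k$ vertices disjoint from those edges and from one another. The $0$-cubes are triples of distinct vertices, giving $\binom{m+2}{3}$; the $1$-cubes are an edge together with two vertices off it, giving $2m\binom{m}{2}$. The heart of the count is the disjointness analysis for edges: two edges of $G$ are disjoint precisely when they are $\alpha_i$ and $\beta_j$ with $i \ne j$, since any two $\alpha$'s share $a$, any two $\beta$'s share $b$, and $\alpha_i, \beta_i$ share $c_i$. There are $m(m-1)$ such pairs, and each uses four distinct vertices, leaving $m-2$ admissible choices for the extra vertex, so there are $m(m-1)(m-2)$ squares. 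The same analysis shows that no three edges can be pairwise disjoint (any three contain two $\alpha$'s or two $\beta$'s), so $\Conf{3}{\Theta_m}$ has no $3$-cubes and is $2$-dimensional.

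Finally I would assemble and simplify
\[
\chi = \binom{m+2}{3} - 2m\binom{m}{2} + m(m-1)(m-2) = \frac{m(m-2)(m-7)}{6}.
\]
As a sanity check, $m=4$ gives $\chi = -4 = 2 - 2\cdot 3$, consistent with the identification of $\cConf_3(\Theta_4)$ with a genus-$3$ surface above. The computation is mostly bookkeeping; the only real content is the edge-disjointness dichotomy, and I expect the main point requiring care is the observation that the complex is truncated at dimension $2$, which keeps the alternating sum finite and tractable. A secondary place to be careful is confirming that every disjoint edge-pair uses exactly four vertices (this would fail if the constraint $i \ne j$ were dropped), since that governs the leading count of $2$-cubes.
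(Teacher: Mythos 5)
Your proposal is correct and follows essentially the same route as the paper: fix the one-midpoint-per-arc admissible subdivision, count $\binom{m+2}{3}$ vertices, $2m\binom{m}{2}$ edges, and $m(m-1)(m-2)$ squares, note there are no $3$-cubes, and take the alternating sum. Your write-up simply makes explicit the justifications (the $\alpha_i$/$\beta_j$ disjointness dichotomy and the Prue--Scrimshaw check) that the paper states without proof.
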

In particular, $\chi(\cConf_3(\Theta_7)) =0$ and $\chi(\cConf_3(\Theta_m)) >0$ when $m>7$. For $m>7$, this implies that $B_3(\Theta_m)$ is not the fundamental group of an aspherical $3$-manifold (a doubling argument shows these have nonpositive Euler characteristic).  This answers another part of Question~\ref{q:Genevois}.

Our main tool to prove Theorems \ref{t:sun} and \ref{t:split} is the PL Morse theory of Bestvina--Brady~ \cite{bestvina-brady}.  This was inspired by Bestvina's proof that braid groups of the tripod graph are free~\cite[Section 4]{bestvina}.
Discrete notions of Morse theory have been used to study graph braid groups before, however most of the earlier papers use Forman's version~\cite{forman}.  

We provide background on PL Morse theory in Section~\ref{sec:PL}, and our results on sun graphs and pulsar graphs are proved in Sections~\ref{sec:sun} and~\ref{sec:pulsar} respectively.

This paper arose from a VIR (Vertically Integrated Research) course on the topic in the Department of 
Mathematics at Louisiana State University.  

We thank Ben Knudsen for comments on an earlier version of this paper.

\section{PL Morse Theory}\label{sec:PL}

We begin this section by reviewing the necessary background on PL Morse theory in the context of cube complexes.  Then in Section~\ref{sec:conf Morse} we describe a natural way to define a Morse function on a discretized configuration space, and provide examples illustrating the computations of the associated descending links.

\subsection{PL Morse theory for cube complexes}

Let $X$ be a finite cube complex. For each cube $c \in X$, consider a characteristic function $\chi_c: C_c \to X$ where $C_c$ is $[0,1]^n$. A function $f: X \to \mathbb{R}$ is said to be a PL Morse function if the following are satisfied.

\begin{enumerate}
    \item For every cube $c$, $f\chi_c: C_c \to \mathbb{R}$ is the restriction of an affine map $\mathbb{R}^m \to \mathbb{R}$.
    \item If the image under $f$ of a cube $c$ is constant, then $c$ is 0-dimensional.
\end{enumerate}

Note that distinct $0$-dimensional cells can have the same image under $f$.  For example, in 
Figure~\ref{fig:level set}, there are three 0-cells in $f^{-1} (b)$. 
Let $v$ be a vertex of $X$. A cube $c$ with $v \in c$ is \emph{descending} at $v$ if $f(v)$ is maximal in the image $f(c)$. The \emph{descending link} at $v$, denoted by $\Lk_{\downarrow}(v)$, is the link of $v$ in the union of the descending cells at $v$.
For example, in Figure~\ref{fig:level set}, $\Lk_{\downarrow}(v_0)$ consists of the two blue vertices shown and the blue edge connecting them, while $\Lk_{\downarrow}(v_4)$ is empty.

The statements and proofs of the following two lemmas above are presented in~\cite[Proposition 2.4 and 2.6]{bestvina}.

\begin{Lemma}
\label{lemma_novertex}
 Let $J = [a,b] \subset \mathbb{R}$ be a closed interval such that $(a,b]$ is
disjoint from $f(X^0)$. Then $f^{-1}(J)$ deformation retracts to $f^{-1}(a)$
\end{Lemma}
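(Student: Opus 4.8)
The plan is to construct an explicit \emph{downward flow} on $f^{-1}(J)$ that strictly decreases $f$ until it reaches the bottom level $f^{-1}(a)$, and to check that it fixes that level; this will exhibit the desired (strong) deformation retraction. The hypothesis that $(a,b]$ misses $f(X^0)$ is exactly what will guarantee that the flow never stalls before reaching level $a$.

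First I would set up the local descent direction. Fix a point $x \in f^{-1}(J)$ and let $\sigma$ be its carrier, the unique cube containing $x$ in its relative interior. Since $f|_\sigma$ is affine and, by the second Morse condition, non-constant whenever $\dim \sigma \ge 1$, its gradient $\nabla_\sigma f$ (computed in the affine span of $\sigma$) is nonzero on every positive-dimensional cube. I would flow $x$ along the ray $u \mapsto x - u\,\nabla_\sigma f/|\nabla_\sigma f|^2$, so that the $f$-value at parameter $u$ is exactly $f(x)-u$; thus $f$ decreases at unit rate. This ray stays in the relative interior of $\sigma$ until either $f$ reaches the value $a$ (in which case I stop) or the ray meets $\partial \sigma$, at which moment the carrier drops to a proper face $\tau$ and I continue the descent with $\nabla_\tau f$ in place of $\nabla_\sigma f$. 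Because the carrier dimension strictly decreases at each transition, after at most $\dim X$ steps the concatenation is a piecewise-linear trajectory along which $f$ strictly decreases.

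Next I would verify that the trajectory actually reaches level $a$ and defines the retraction. The only way descent could halt strictly above level $a$ is to arrive at a $0$-dimensional carrier (a vertex), since on a positive-dimensional carrier the nonzero $\nabla f$ always supplies a descent direction. But a vertex reached along a descending path that began at height $\le b$ and is still strictly above $a$ would have $f$-value in $(a,b]$, contradicting $(a,b] \cap f(X^0) = \varnothing$. Hence every trajectory reaches $f = a$, and I may define $H\colon f^{-1}(J)\times[0,1]\to f^{-1}(J)$ by letting $H(x,t)$ be the point of the trajectory of $x$ at $f$-value $(1-t)f(x)+ta$. Since $f$ decreases monotonically from $f(x)\in[a,b]$ down to $a$, the image stays in $f^{-1}(J)$, and points of $f^{-1}(a)$ have constant trajectory and so are fixed for all $t$; thus $H$ is a strong deformation retraction onto $f^{-1}(a)$, \emph{provided} it is continuous.

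The main obstacle is precisely this continuity, i.e.\ the continuous dependence of the piecewise-linear descent trajectory on its starting point. The velocity field $-\nabla_{\mathrm{carrier}} f$ is discontinuous across the faces of the cubes, so while two nearby starting points in a common top cube move in parallel and stay close, difficulty arises for starting points whose trajectories strike a lower-dimensional face, where nearby trajectories may pass into different adjacent faces carrying different gradients. One must check that the concatenation of linear pieces nevertheless varies continuously, with the corner cases handled as limits of the generic face-hitting behavior. This is the technical content of the cited result of Bestvina, and I would either appeal to it directly or verify continuity by hand, using the finiteness of $X$ to bound the number of cube-to-face transitions and checking that the exit point and the subsequent direction depend continuously on $x$.
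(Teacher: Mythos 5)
Your setup and your use of the hypothesis are fine as far as they go: the condition $(a,b]\cap f(X^0)=\varnothing$ is indeed what rules out a descending trajectory stalling at a vertex above level $a$. The fatal problem is the one you flag in your last paragraph, and it is not a technicality that can be ``verified by hand'': the flow you define is genuinely \emph{discontinuous} in the starting point, even under the hypotheses of the lemma, so the proposed verification must fail. Concretely, let $X$ be two unit squares glued along an edge (a ``tent''), each square given coordinates $(s,t)\in[0,1]^2$ with the shared ridge at $t=0$, and let $f=2s-t$ on each square; the vertex values are $\{0,2,-1,1\}$, and $f$ is affine and non-constant on every cell. Take $J=[0.4,0.6]$, so $(a,b]$ misses $f(X^0)$. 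A point in the relative interior of the ridge has the ridge as its carrier, so your rule flows it along the ridge: starting from $(0.3,0)$ it stops at $(0.2,0)$. But a point $(0.3,\epsilon)$ in the interior of either square flows along the direction proportional to $(-2,1)$ and stops at $(0.22+0.4\epsilon,\;0.04+0.8\epsilon)$, which converges to $(0.22,0.04)\neq(0.2,0)$ as $\epsilon\to 0$. So your homotopy $H$ is discontinuous at every ridge point, and the claim that ``the exit point and the subsequent direction depend continuously on $x$'' is false. Moreover no repair by re-assigning the ridge's descent direction can work: trajectories starting on the two sides of the ridge converge to two \emph{different} broken paths, so no rule attaching a single velocity to each point is continuous there. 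The other escape you offer --- ``appeal to the cited result of Bestvina directly'' --- is circular, since that cited proposition \emph{is} the lemma being proved (indeed the paper itself offers no argument, only the citation to Bestvina's Propositions 2.4 and 2.6).

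The correct argument (this is what Bestvina, and Bestvina--Brady in Lemma 2.3 of their Morse theory paper, actually do) is not a flow but a cell-by-cell construction, by induction on skeleta: one shows that $f^{-1}(J)$ deformation retracts to $f^{-1}(a)\cup\bigl(f^{-1}(J)\cap X^{(k)}\bigr)$ for $k=\dim X,\dim X-1,\dots,0$. At each stage, for a $k$-cell $c$ meeting $f^{-1}((a,b])$, the hypothesis guarantees that every vertex of $c$ has value $\le a$ or $>b$; since the maximum of the affine map $f|_c$ is attained at a vertex, this forces $\max_c f>b$, so the ``top'' $c\cap f^{-1}(b)$ is a genuine facet of the convex polytope $P=c\cap f^{-1}(J)$ containing no vertex of $c$. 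Radial projection from a point just beyond that facet collapses $P$ onto the union of its bottom $c\cap f^{-1}(a)$ and its lateral part $\partial c\cap f^{-1}(J)$, \emph{rel} that union; because each cell-wise retraction fixes $\partial c\cap f^{-1}(J)$ pointwise, these glue to a continuous homotopy over the whole $k$-skeleton, and composing the finitely many stages gives the deformation retraction. Note how this evades your tent example: points of a square near the ridge are first pushed, within that square, onto its boundary-plus-bottom (staying near the ridge), rather than being transported along the square's gradient ray. If you want a complete proof rather than a citation, you should restructure your argument along these lines; as written, the gradient-flow construction does not prove the lemma.
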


\begin{Lemma}
\label{lemma_vertex}
Suppose that $J = [a,b]$ is a closed interval and that $f^{-1}(J)$ contains one vertex $v$ and $f(v) = b$. Then the pair $(f^{-1}(J),f^{-1}(a))$ is homotopy equivalent rel $f^{-1}(a)$ to $(Q,f^{-1}(a))$ where $Q$ is $f^{-1}(a)$ with the cone on $\Lk_{\downarrow}(v)$ attached.

\end{Lemma}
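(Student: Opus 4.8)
The plan is to pass the single critical level $b = f(v)$ and show that the only change in homotopy type, relative to the fixed bottom $f^{-1}(a)$, is the attachment of a cone on the descending link. I would first choose a value $b-\epsilon$ with $a < b - \epsilon < b$; since $v$ is the unique vertex in $f^{-1}(J)$ and $f(v) = b$, the half-open interval $(a, b-\epsilon]$ is disjoint from $f(X^0)$. I would then split $J$ as $[a, b-\epsilon] \cup [b-\epsilon, b]$ and analyze the two pieces separately, gluing along the regular level set $f^{-1}(b-\epsilon)$.

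For the lower piece, Lemma~\ref{lemma_novertex} applies directly: $f^{-1}([a, b-\epsilon])$ deformation retracts onto $f^{-1}(a)$, and this retraction is the identity on $f^{-1}(a)$. Consequently it suffices to describe how $f^{-1}([b-\epsilon, b])$ is built from the level set $f^{-1}(b-\epsilon)$. Once I show that this upper piece deformation retracts, rel $f^{-1}(b-\epsilon)$, onto $f^{-1}(b-\epsilon)$ with a cone on $\Lk_{\downarrow}(v)$ attached, I can compose with the lower retraction to obtain the claimed equivalence rel $f^{-1}(a)$, carrying the base of the cone down into $f^{-1}(a)$.

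The heart of the argument is the local analysis in the closed star of $v$. On each cube $c$ containing $v$ the function $f$ is affine, and $f(v) = b$ is either maximal on $c$ — so $c$ is descending and contributes a simplex to $\Lk_{\downarrow}(v)$ — or else $c$ has a vertex with $f$-value exceeding $b$, so that moving from $v$ into $c$ first increases $f$ out of $J$. I would show that for $\epsilon$ small the union of descending cubes, intersected with the slab $f^{-1}([b-\epsilon, b])$, is naturally a cone on $\Lk_{\downarrow}(v)$ with cone point $v$, its base being the slice at level $b-\epsilon$, a copy of $\Lk_{\downarrow}(v)$ sitting in $f^{-1}(b-\epsilon)$. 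On a non-descending cube I would push each point of the slab in the direction of decreasing $f$ away from $v$; using the affine structure this straight-line homotopy retracts the slab portion of such a cube onto the union of its bottom face (in $f^{-1}(b-\epsilon)$) and its descending faces, without disturbing the descending cone. Assembling these gives the desired deformation retraction of $f^{-1}([b-\epsilon, b])$.

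The main obstacle is to make these cube-by-cube retractions compatible across shared faces, so that they patch together into a single well-defined piecewise-linear deformation retraction that is stationary on $f^{-1}(b-\epsilon)$; the difficulty is genuine because a single cube may be descending along some subfaces and ascending along others. I would resolve this by defining the retraction intrinsically from the affine data of $f$ — for instance as a straight-line flow within the fibers of $f$ toward the descending star — so that the formula on a cube restricts to the corresponding formula on each of its faces, guaranteeing agreement on overlaps. Verifying this compatibility, together with the identification of the truncated descending star with $\mathrm{Cone}(\Lk_{\downarrow}(v))$, is the technical crux; everything else follows formally from Lemma~\ref{lemma_novertex} and the gluing along $f^{-1}(b-\epsilon)$.
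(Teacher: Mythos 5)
Your proposal is essentially the same argument as the paper's: the paper gives no proof of its own, deferring to Bestvina (Propositions 2.4 and 2.6 of the cited PL Morse theory paper), and that proof is exactly your two-step scheme --- split $J$ at a regular value $b-\epsilon$, retract the vertex-free lower piece onto $f^{-1}(a)$ via Lemma~\ref{lemma_novertex}, and retract the upper slab rel $f^{-1}(b-\epsilon)$ onto the level set with the truncated descending star attached, identifying that truncated star with the cone on $\Lk_{\downarrow}(v)$. The face-compatibility issue you flag as the technical crux is genuine but is handled there in the standard way (cube-by-cube retractions chosen to restrict correctly on faces), so your plan matches the cited proof in both structure and substance.
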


    Note that if $f^{-1}(b)$ contains more than one vertex, then $Q$ is $f^{-1}(a)$ with the cones on the descending links of all the vertices attached, see Figure~\ref{fig:level set}.

\begin{figure}
\begin{tikzpicture}[scale = .75]
    \draw[fill=lightgray] (0,0)--(3,3)--(6,0)--(3,-3)--(0,0);
    \draw[fill=red] (0,0) circle (3pt) node[above=0.1] { $v_1$};
    \draw[fill=black] (1.5,1.5) circle (3pt);
    \draw[fill=black] (3,3) circle (3pt) node[above=0.1] { $v_0$};
    \draw[fill=black] (4.5,1.5) circle (3pt);
    \draw[fill=red] (6,0) circle (3pt) node[above=0.1] { $v_3$};
    \draw[fill=black] (4.5,-1.5) circle (3pt);
    \draw[fill=black] (3,-3) circle (3pt)  node[below=0.1] { $v_4$};
    \draw[fill=black] (1.5,-1.5) circle (3pt);
    \draw[thick] (0,0)--(1.5,1.5)--(3,3)--(4.5,1.5)--(6,0)--(4.5,-1.5)--(3,-3)--(1.5,-1.5)--(0,0);
    \draw[thick] (1.5,1.5)--((4.5,-1.5);
    \draw[thick] (4.5,1.5)--(1.5,-1.5);
    \draw[thick,->] (9,-4)--(9,4) node[right] {$\mathbb{R}$};
    \draw[gray,dashed] (0,0)--(9,0) node[right=0.1] { $b$};
    \draw[gray,dashed] (1,-1)--(9,-1) node[right=0.1] { $a$};
    \draw[red,thick] (0,0)--(1,-1)--(5,-1)--(6,0);
    \draw[fill=red] (2,-1)--(3,0)--(4,-1)--(2,-1);
    \draw[fill=red] (3,0) circle (3pt) node[above=0.1] { $v_2$};
        \draw[blue,thick](2.3, 2.3)--(3.7, 2.3);
 \draw[fill=blue] (2.3, 2.3) circle (2pt);
 \draw[fill=blue] (3.7, 2.3) circle (2pt);
\end{tikzpicture}
\caption{
Let $X \subset \mathbb{R}^n$ be the above finite cube complex, and define a PL Morse function $f:X\to \mathbb R$ by projection to the $y$-axis.  
Then $\Lk_{\downarrow}(v_0)$ is shown in blue, while $\Lk_{\downarrow}(v_4)$ is empty. 
Let  $J = [a, b]$.  Then $f^{-1}(b)\cap X^{0}$ consists of the three vertices $v_1$, $v_2$, and $v_3$,  and $f^{-1}(J)$ deformation retracts to the red space -- the union of the horizontal segment $f^{-1}(a)$ and the cones 
on the descending links of the three vertices. }
\label{fig:level set}
\end{figure}
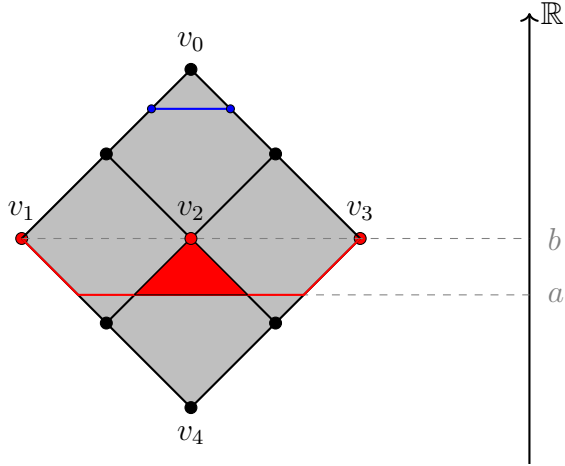

\begin{Corollary}\label{c:freeproduct}
Let the image of the Morse function $f$ be an interval $[a,c]\subset \mathbb{R}$ and let $b \in (a, c)$. If
\begin{enumerate}[(a)]
    \item $f^{-1}([a,b])$ is connected
    \item $f^{-1}((b,c))$ has no vertices
    \item $f^{-1}(c)$ is a set of vertices each having descending links a union of contractible spaces
\end{enumerate}
then $f^{-1}([a,c])$ is homotopy equivalent to a wedge of $f^{-1}([a,b])$ with a collection of circles and consequently
$$\pi _1(f^{-1}([a,c]))=\pi _1(f^{-1}([a,b]))\ast \mathbb{F}.$$
where $\mathbb{F}$ is a free group on a finite number of generators.    
\end{Corollary}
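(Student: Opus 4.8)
The plan is to peel off the top slab $f^{-1}([b,c])$ and show that passing from $f^{-1}([a,b])$ to $f^{-1}([a,c])$ only wedges on circles. First I would fix an auxiliary value $b'\in(b,c)$. By condition~(b), $(b,b']$ is disjoint from $f(X^0)$, so Lemma~\ref{lemma_novertex} applied to $[b,b']$ shows that $f^{-1}([b,b'])$ deformation retracts onto $f^{-1}(b)$; as this retraction fixes $f^{-1}(b)$, it extends by the identity on $f^{-1}([a,b])$ to a deformation retraction of $f^{-1}([a,b'])$ onto $f^{-1}([a,b])$. Writing $Y := f^{-1}([a,b'])$, we have $Y\simeq f^{-1}([a,b])$, which is connected by~(a).

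Next I would analyze the slab $[b',c]$. By the choice of $b'$ together with~(b), the only vertices of $X$ whose value lies in $[b',c]$ are those in $f^{-1}(c)$. Applying the multiple-vertex form of Lemma~\ref{lemma_vertex} (the Remark following it) to the interval $[b',c]$, the pair $(f^{-1}([b',c]),\,f^{-1}(b'))$ is homotopy equivalent rel $f^{-1}(b')$ to $(Q',\,f^{-1}(b'))$, where $Q'$ is $f^{-1}(b')$ with the cones $C(\Lk_{\downarrow}(v))$, $v\in f^{-1}(c)$, attached along $\Lk_{\downarrow}(v)\hookrightarrow f^{-1}(b')$. Since $f^{-1}([a,c]) = f^{-1}([a,b'])\cup_{f^{-1}(b')} f^{-1}([b',c])$ and inclusions of subcomplexes are cofibrations, the standard gluing lemma for homotopy equivalences upgrades this to a homotopy equivalence
$$ f^{-1}([a,c]) \;\simeq\; Q \;:=\; Y \cup \bigcup_{v\in f^{-1}(c)} C\!\left(\Lk_{\downarrow}(v)\right), $$
each cone still attached along $\Lk_{\downarrow}(v)\hookrightarrow Y$.

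The main point is to read off the homotopy type of $Q$. By condition~(c), each $\Lk_{\downarrow}(v)$ is a disjoint union of $k_v$ contractible pieces. Since each piece is contractible and lies in the connected space $Y$, its inclusion is nullhomotopic, so up to homotopy $C(\Lk_{\downarrow}(v))$ may be replaced by the cone on $k_v$ points, i.e.\ a star of $k_v$ edges joining one new vertex to $k_v$ points of $Y$. Attaching such a star to the connected space $Y$ is homotopy equivalent to wedging on $k_v-1$ circles (one edge is absorbed into $Y$, and each remaining edge closes up a loop). The cones for different $v$ meet $Y$ only along their bases and have distinct cone points, so the attachments are independent and the space stays connected; hence
$$ Q \;\simeq\; Y \vee \bigvee_{i=1}^{N} S^1, \qquad N \;=\; \sum_{v\in f^{-1}(c)} (k_v - 1), $$
with $N$ finite since $X$ is finite. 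Combining with $Y\simeq f^{-1}([a,b])$ and Seifert--van Kampen yields the stated free-product decomposition of $\pi_1$.

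The step needing the most care is the coning computation. I must justify that nullhomotoping each attaching map does not change the homotopy type of the pushout, which rests on the inclusions $\Lk_{\downarrow}(v)\hookrightarrow Y$ being cofibrations (they are, being subcomplex inclusions) together with homotopy-invariance of homotopy pushouts. I would also check that each $\Lk_{\downarrow}(v)$ is nonempty, so that $k_v\ge 1$ and no cone contributes an isolated point; this holds because a vertex realizing the maximal value $c$ in a connected complex has at least one descending edge. The two deformation-retraction lemmas, the gluing lemma, and van Kampen are then routine given the tools already established.
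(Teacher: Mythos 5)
Your proof is correct and follows essentially the same route as the paper's: both peel off the slab above $b$, retract it down via Lemma~\ref{lemma_novertex}, cone off the descending links of the vertices in $f^{-1}(c)$ via Lemma~\ref{lemma_vertex} (in its multiple-vertex form), and use connectedness of $f^{-1}([a,b])$ together with contractibility of the pieces of each descending link to replace the attached cones by wedged circles. The only difference is that you make explicit the steps the paper compresses into one sentence — the gluing along $f^{-1}(b')$, the nullhomotopy replacement of the attaching maps, and the count $\sum_{v}(k_v-1)$ of circles.
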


\begin{proof}
Let us assume that $f^{-1}(c)$ is a single vertex $v$ and denote $f^{-1}([a,b])$ by $Z$. Given that  $Z$ is connected and $\Lk_{\downarrow}(v)$ is homotopy equivalent to a finite set of points, $f^{-1}([a,c])$ is homotopy equivalent to $Z\cup \displaystyle{\bigvee_{\text{finite}}} S^1$ (see Figure \ref{f:wedge}) by Lemmas~\ref{lemma_novertex} and \ref{lemma_vertex}. The same argument works when $f^{-1}(c)$ is a finite set of vertices. 
\end{proof}

\tikzset{every picture/.style={line width=0.75pt}} 
\begin{figure}
\centering
\begin{tikzpicture}[x=0.75pt,y=0.75pt,yscale=-1,xscale=1]
\draw  [fill=lightgray  ,fill opacity=1 ][line width=1.5]  (136,168) .. controls (156,158) and (187,189.88) .. (237,169.88) .. controls (287,149.88) and (295,185.88) .. (280,214.88) .. controls (265,243.88) and (172,250.88) .. (136,228) .. controls (100,205.12) and (116,178) .. (136,168) -- cycle ;
\draw [color={rgb, 255:red, 208; green, 2; blue, 27 }  ,draw opacity=1 ][fill={rgb, 255:red, 231; green, 25; blue, 25 }  ,fill opacity=1 ][line width=1.5]    (182,93.88) -- (214,187.88) ;
\draw [shift={(214,187.88)}, rotate = 71.2] [color={rgb, 255:red, 208; green, 2; blue, 27 }  ,draw opacity=1 ][fill={rgb, 255:red, 208; green, 2; blue, 27 }  ,fill opacity=1 ][line width=1.5]      (0, 0) circle [x radius= 4.36, y radius= 4.36]   ;
\draw [shift={(182,93.88)}, rotate = 71.2] [color={rgb, 255:red, 208; green, 2; blue, 27 }  ,draw opacity=1 ][fill={rgb, 255:red, 208; green, 2; blue, 27 }  ,fill opacity=1 ][line width=1.5]      (0, 0) circle [x radius= 4.36, y radius= 4.36]   ;
\draw [color={rgb, 255:red, 208; green, 2; blue, 27 }  ,draw opacity=1 ][line width=1.5]    (182,93.88) -- (178,189.88) ;
\draw [shift={(178,189.88)}, rotate = 92.39] [color={rgb, 255:red, 208; green, 2; blue, 27 }  ,draw opacity=1 ][fill={rgb, 255:red, 208; green, 2; blue, 27 }  ,fill opacity=1 ][line width=1.5]      (0, 0) circle [x radius= 4.36, y radius= 4.36]   ;
\draw [shift={(182,93.88)}, rotate = 92.39] [color={rgb, 255:red, 208; green, 2; blue, 27 }  ,draw opacity=1 ][fill={rgb, 255:red, 208; green, 2; blue, 27 }  ,fill opacity=1 ][line width=1.5]      (0, 0) circle [x radius= 4.36, y radius= 4.36]   ;
\draw [color={rgb, 255:red, 208; green, 2; blue, 27 }  ,draw opacity=1 ][fill={rgb, 255:red, 208; green, 2; blue, 27 }  ,fill opacity=1 ][line width=1.5]    (182,93.88) -- (197,191.88) ;
\draw [shift={(197,191.88)}, rotate = 81.3] [color={rgb, 255:red, 208; green, 2; blue, 27 }  ,draw opacity=1 ][fill={rgb, 255:red, 208; green, 2; blue, 27 }  ,fill opacity=1 ][line width=1.5]      (0, 0) circle [x radius= 4.36, y radius= 4.36]   ;
\draw [shift={(182,93.88)}, rotate = 81.3] [color={rgb, 255:red, 208; green, 2; blue, 27 }  ,draw opacity=1 ][fill={rgb, 255:red, 208; green, 2; blue, 27 }  ,fill opacity=1 ][line width=1.5]      (0, 0) circle [x radius= 4.36, y radius= 4.36]   ;
\draw  [fill=lightgray  ,fill opacity=1 ][line width=1.5]  (439,169) .. controls (459,159) and (490,190.88) .. (540,170.88) .. controls (590,150.88) and (598,186.88) .. (583,215.88) .. controls (568,244.88) and (475,251.88) .. (439,229) .. controls (403,206.12) and (419,179) .. (439,169) -- cycle ;
\draw [color={rgb, 255:red, 208; green, 2; blue, 27 }  ,draw opacity=1 ][line width=1.5]     ;
\draw  [color={rgb, 255:red, 208; green, 2; blue, 27 }  ,draw opacity=1 ][fill={rgb, 255:red, 233; green, 107; blue, 119 }  ,fill opacity=1 ][line width=1.5]  (182,93.88) -- (162.21,185.88) -- (139,185.88) -- cycle ;
\draw [color={rgb, 255:red, 208; green, 2; blue, 27 }  ,draw opacity=1 ][line width=1.5]    (182,93.88) -- (162.21,185.88) ;
\draw [shift={(162.21,185.88)}, rotate = 102.14] [color={rgb, 255:red, 208; green, 2; blue, 27 }  ,draw opacity=1 ][fill={rgb, 255:red, 208; green, 2; blue, 27 }  ,fill opacity=1 ][line width=1.5]      (0, 0) circle [x radius= 4.36, y radius= 4.36]   ;
\draw [shift={(182,93.88)}, rotate = 102.14] [color={rgb, 255:red, 208; green, 2; blue, 27 }  ,draw opacity=1 ][fill={rgb, 255:red, 208; green, 2; blue, 27 }  ,fill opacity=1 ][line width=1.5]      (0, 0) circle [x radius= 4.36, y radius= 4.36]   ;
\draw [color={rgb, 255:red, 208; green, 2; blue, 27 }  ,draw opacity=1 ][line width=1.5]    (182,93.88) -- (139,185.88) ;
\draw [shift={(139,185.88)}, rotate = 115.05] [color={rgb, 255:red, 208; green, 2; blue, 27 }  ,draw opacity=1 ][fill={rgb, 255:red, 208; green, 2; blue, 27 }  ,fill opacity=1 ][line width=1.5]      (0, 0) circle [x radius= 4.36, y radius= 4.36]   ;
\draw [shift={(182,93.88)}, rotate = 115.05] [color={rgb, 255:red, 208; green, 2; blue, 27 }  ,draw opacity=1 ][fill={rgb, 255:red, 208; green, 2; blue, 27 }  ,fill opacity=1 ][line width=1.5]      (0, 0) circle [x radius= 4.36, y radius= 4.36]   ;
\draw [color={rgb, 255:red, 208; green, 2; blue, 27 }  ,draw opacity=1 ]   (139,185.88) -- (162.21,185.88) ;
\draw [shift={(162.21,185.88)}, rotate = 0] [color={rgb, 255:red, 208; green, 2; blue, 27 }  ,draw opacity=1 ][fill={rgb, 255:red, 208; green, 2; blue, 27 }  ,fill opacity=1 ][line width=0.75]      (0, 0) circle [x radius= 3.35, y radius= 3.35]   ;
\draw [shift={(139,185.88)}, rotate = 0] [color={rgb, 255:red, 208; green, 2; blue, 27 }  ,draw opacity=1 ][fill={rgb, 255:red, 208; green, 2; blue, 27 }  ,fill opacity=1 ][line width=0.75]      (0, 0) circle [x radius= 3.35, y radius= 3.35]   ;
\draw [color={rgb, 255:red, 208; green, 2; blue, 27 }  ,draw opacity=1 ][line width=1.5]    (467.5,189) .. controls (416.5,169) and (370.5,111) .. (411.5,108) .. controls (452.5,105) and (438.5,117) .. (467.5,189) -- cycle ;
\draw [shift={(467.5,189)}, rotate = 201.41] [color={rgb, 255:red, 208; green, 2; blue, 27 }  ,draw opacity=1 ][fill={rgb, 255:red, 208; green, 2; blue, 27 }  ,fill opacity=1 ][line width=1.5]      (0, 0) circle [x radius= 4.36, y radius= 4.36]   ;
\draw [color={rgb, 255:red, 208; green, 2; blue, 27 }  ,draw opacity=1 ][line width=1.5]    (467.5,189) .. controls (432.5,134) and (416.5,96) .. (455.5,92) .. controls (494.5,88) and (483.5,117) .. (467.5,189) -- cycle ;
\draw [shift={(467.5,189)}, rotate = 237.53] [color={rgb, 255:red, 208; green, 2; blue, 27 }  ,draw opacity=1 ][fill={rgb, 255:red, 208; green, 2; blue, 27 }  ,fill opacity=1 ][line width=1.5]      (0, 0) circle [x radius= 4.36, y radius= 4.36]   ;
\draw [color={rgb, 255:red, 208; green, 2; blue, 27 }  ,draw opacity=1 ][line width=1.5]    (467.5,189) .. controls (504.5,124) and (471.5,99) .. (510.5,95) .. controls (549.5,91) and (531.5,160) .. (467.5,189) -- cycle ;
\draw [shift={(467.5,189)}, rotate = 299.65] [color={rgb, 255:red, 208; green, 2; blue, 27 }  ,draw opacity=1 ][fill={rgb, 255:red, 208; green, 2; blue, 27 }  ,fill opacity=1 ][line width=1.5]      (0, 0) circle [x radius= 4.36, y radius= 4.36]   ;

\draw (238,190) node [anchor=north west][inner sep=0.75pt]   [align=left] {Z};
\draw (538,193) node [anchor=north west][inner sep=0.75pt]   [align=left] {Z};
\draw (157,85) node [anchor=north west][inner sep=0.75pt]  [color={rgb, 255:red, 208; green, 2; blue, 27 }  ,opacity=1 ] [align=left] {\textcolor[rgb]{0.82,0.01,0.11}{v}};
\draw (337,172.4) node [anchor=north west][inner sep=0.75pt]    {$\simeq $};
\end{tikzpicture}
\caption{In the figure on the left, $Z \cup  \text{Cone($v$,$\Lk_{\downarrow}(v)$)}$ is the deformation retract of $f^{-1}([a,c])$ and is homotopy equivalent to $Z\cup \displaystyle{\bigvee_{\text{finite}}} S^1$ (on the right). }
\label{f:wedge}
\end{figure}
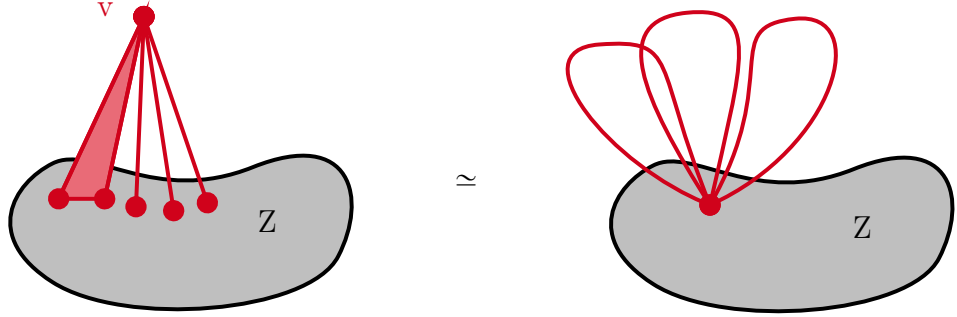

\begin{Remark}
If the descending links of all the vertices in a connected cube complex are unions of contractible spaces then it is homotopy equivalent to a wedge of circles.    
\end{Remark}

\subsection{Morse functions for configuration spaces} \label{sec:conf Morse}

We now describe the PL Morse functions we will use on the configuration spaces  $\Conf{n}{\gG}$.  In each of our examples, we choose an embedding of $\G$ 
into $\mathbb{R}^2$ with no horizontal edges, and we define 
a PL Morse function 
$h: \G \to \mathbb{R}$ as the restriction of the projection of $\mathbb{R}^2$ onto the $y$-axis.  
This induces a PL Morse function $f: \Conf{n}{\gG} \rightarrow \mathbb{R}$
as follows: for $\{y_1,y_2,\dots, y_n\} \in \Conf{n}{\gG}$, we define 
$$f(\{y_1,y_2,\dots, y_n\}) = h(y_1) + \dots + h(y_n).$$

Note that as $\G$ is finite, the Morse function $f$ attains a minimum.  
For any vertex $\{v_1, \dots, v_n\}$ 
of $\Conf{n}{\gG} $ we can find its descending link by looking at the possible ways in which each of the $v_i$ 
 can  ``move down'' in $\G$.  This is illustrated in the following example.

\begin{figure}
\begin{tikzpicture}[scale = .75]
\draw[thick] (1,0) -- (2.732,1) -- (2.732,3) -- (1,4) -- (-0.732,3) --(-0.732,1) -- (1,0);
\draw[fill=black] (1,0) circle (3pt);
\draw[fill=black] (1,4) circle (3pt);
\draw[fill=black] (2.732,1) circle (3pt);
\draw[fill=black] (2.732,3) circle (3pt);
\draw[fill=black] (-0.732,1) circle (3pt);
\draw[fill=black] (-0.732,3) circle (3pt);
\node at (1,-0.5) {$a_1$};
\node at (3.232,1) {$a_2$};
\node at (3.232,3) {$a_3$};
\node at (1,4.5) {$a_4$};
\node at (-1.232,3) {$a_5$};
\node at (-1.232,1) {$a_6$};
\draw[thick,->] (7,-1.5)--(7,5.5) node[right] {$\mathbb{R}$};
\draw[gray,dashed] (1,4)--(7,4) node[right=0.2] {\large $b$};
\draw[gray,dashed] (1,0)--(7,0) node[right=0.2] {\large $a$};
\end{tikzpicture}
\caption{The height function for the cycle graph $\Gamma$.}\label{f:cycle}

\label{fig:c6_heightfn}
\end{figure}
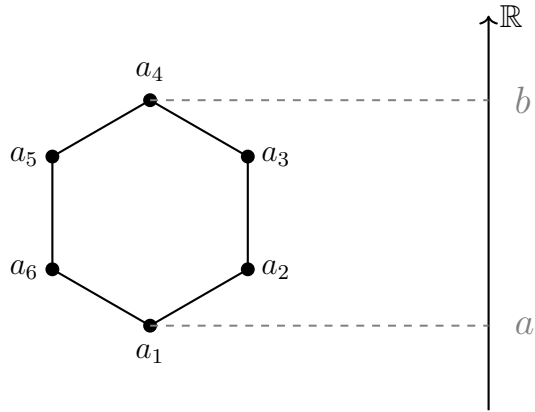
\begin{figure}
\begin{tikzpicture}[scale = .75]
\draw[thick] (0,0) -- (1.732,1) -- (1.732,3) -- (0,4) -- (-1.732,3) --(-1.732,1) -- (0,0);
\draw[fill=cyan] (0,0) circle (3pt);
\draw[fill=black] (0,4) circle (3pt);
\draw[fill=cyan] (1.732,1) circle (3pt);
\draw[fill=black] (1.732,3) circle (3pt);
\draw[fill=black] (-1.732,1) circle (3pt);
\draw[fill=cyan] (-1.732,3) circle (3pt);
\node at (0,-0.4) {$a_1$};
\node at (2.232,1) {$a_2$};
\node at (2.232,3) {$a_3$};
\node at (0,4.4) {$a_4$};
\node at (-2.232,3) {$a_5$};
\node at (-2.232,1) {$a_6$};

\begin{scope}[xshift=6.5cm]
\draw[thick] (0,0) -- (1.732,1) -- (1.732,3) -- (0,4) -- (-1.732,3) --(-1.732,1) -- (0,0);
\draw[fill=magenta] (0,0) circle (3pt);
\draw[fill=magenta] (0,4) circle (3pt);
\draw[fill=black] (1.732,1) circle (3pt);
\draw[fill=magenta] (1.732,3) circle (3pt);
\draw[fill=black] (-1.732,1) circle (3pt);
\draw[fill=black] (-1.732,3) circle (3pt);
\node at (0,-0.4) {$a_1$};
\node at (2.232,1) {$a_2$};
\node at (2.232,3) {$a_3$};
\node at (0,4.4) {$a_4$};
\node at (-2.232,3) {$a_5$};
\node at (-2.232,1) {$a_6$};

\end{scope}
\begin{scope}[xshift = 13cm]
\draw[thick] (0,0) -- (1.732,1) -- (1.732,3) -- (0,4) -- (-1.732,3) --(-1.732,1) -- (0,0);
\draw[fill=black] (0,0) circle (3pt);
\draw[fill=blue] (0,4) circle (3pt);
\draw[fill=blue] (1.732,1) circle (3pt);
\draw[fill=black] (1.732,3) circle (3pt);
\draw[fill=blue] (-1.732,1) circle (3pt);
\draw[fill=black] (-1.732,3) circle (3pt);
\node at (0,-0.4) {$a_1$};
\node at (2.232,1) {$a_2$};
\node at (2.232,3) {$a_3$};
\node at (0,4.4) {$a_4$};
\node at (-2.232,3) {$a_5$};
\node at (-2.232,1) {$a_6$};
\end{scope}

\begin{scope}[xshift = 7cm, yshift = -6cm]
\draw[thick, fill=lightgray] (0,5) -- (-2,3) -- (-2,0) -- (0,2) --(0,5);
\draw[thick,red] (-1,4) -- (0,3.5);
\draw[fill=magenta] (0,5) circle (3pt)node[right=0.05, magenta] { $a_1a_3a_4$};
\draw[fill=black] (-2,3) circle (3pt) node[left=0.05] { $a_1a_3a_5$};
\draw[fill=black] (-2,0) circle (3pt) node[below=0.05] { $a_1a_2a_5$};
\draw[fill=black] (0,2) circle (3pt) node[right=0.05] { $a_1a_2a_4$};
\end{scope}
\begin{scope}[ yshift = -4cm]
\node  at (0,2) [label={[xshift=0cm, yshift=-0.1cm, cyan]$a_1a_2a_5$}] { };
\draw[thick, fill=lightgray] (0,2) -- (0,0.5);
\draw[fill=red] (0,1.5) circle (2pt);
\draw[fill=cyan] (0,2) circle (3pt);
\draw[fill=black] (0,0.5) circle (3pt) node[below=0.1] { $a_1a_2a_6$};
\end{scope}

\begin{scope}[xshift = 13cm, yshift = -4cm]
\fill[black] (0,2) circle (3pt)node[above=0.1] { $a_2a_5a_6$};
\fill[black] (2,0) circle (3pt) node[right=0.1] { $a_1a_4a_6$};
\fill[black] (2,2) circle (3pt) node[above=0.1] { $a_1a_5a_6$};
\fill[black] (0,-2) circle (3pt)node[below=0.1] { $a_2a_3a_6$};
\fill[black] (-2,0) circle (3pt)node[left=0.05] { $a_1a_2a_4$};
\fill[black] (-2,-2) circle (3pt)node[below=0.1] { $a_1a_2a_3$};
\fill[black] (2,-2) circle (3pt)node[below=0.1] { $a_1a_3a_6$};
\fill[black] (-2,2) circle (3pt)node[above=0.1] { $a_1a_2a_5$};
\draw[thick] (-2,2) -- (2, 2);
\draw[thick] (-2,2) -- (-2, -2);
\draw[thick] (0,2) -- (0, -2);
\draw[thick] (2,0) -- (-2, 0);
\draw[thick] (2,2) -- (2, -2);
\draw[thick] (-2,-2) -- (2, -2);
\draw[thick, red] (-0.6,0) -- (0, 0.6);
\draw[thick, red] (0.6,0) -- (0, 0.6);
\draw[thick, red] (0.6, 0) -- (0, -0.6);
\draw[thick, red] (-0.6,0) -- (0, -0.6);
\fill[blue] (0,0) circle (3pt)node[above right=0.13] { $a_2a_4a_6$};
\end{scope}

\end{tikzpicture}
\caption{The vertices $\{a_1,a_2,a_5\}, \{a_1,a_3,a_4\}$ and $\{a_2, a_4, a_6\}$ in $\Conf{3}{\gG}$ and their descending links.}
\label{fig:c6_egv1}
\end{figure}
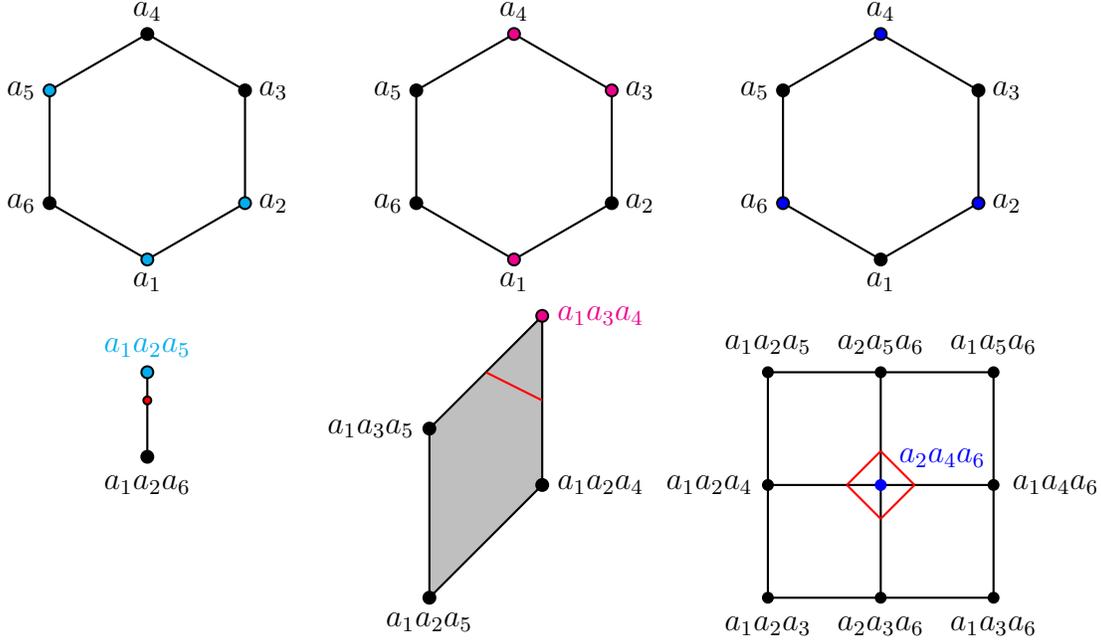

\begin{example}
Consider a cycle graph $\gG$ with six vertices. Figure \ref{f:cycle} shows an 
embedding of $\Gamma$ into the plane, and this induces a Morse function $f$ on $\Conf{3}{\gG}$. 
Notice that the local minima of $f$ are attained at the vertices  $\{a_1, a_2, a_6\}$, $\{a_1, a_5, a_6\}$, and $\{a_1, a_2, a_3\}$,  because for each $a_i$ in each of these triples, there is no way to move down in $\G$ to another unoccupied vertex.
Hence there are no descending cells at these vertices, and their descending links are empty. 
We will discuss 
the descending links of 
a few other vertices in $\Conf{3}{\gG}$ so that the reader can gain some intuition, see Figure~\ref{fig:c6_egv1}.

The descending link of the vertex $\{a_1, a_2, a_5\}$ is a single point (red in the figure), since there is one way to move $a_5$ down (namely, to $a_6$), while $a_1$ and $a_2$ cannot be moved down. 

For the vertex $\{a_1,a_3, a_4\}$ in $\Conf{3}{\gG}$ there are two possible ways to move down: we can go from $a_4$ to $a_5$ and $a_3$ to $a_2$. As these 
 two movements are independent of each other (i.e.~the edges of $\G$ involved are disjoint) this corresponds to a descending square in 
  $\Conf{3}{\gG}$, as shown in the middle in Figure~\ref{fig:c6_egv1}, 
the descending link of $\{a_1, a_3, a_4\}$ is the red edge.

Finally, for the vertex $\{a_2, a_4, a_6\}$ in $\Conf{3}{\gG}$, there are four possible ways to move down:  $a_6$ and $a_2$ can move  to $a_1$, and  $a_4$ can move to $a_5$ or $a_3$. We cannot move $a_6$ at the same time as $a_2$, but can move $a_4$ independently of both. Thus, the descending cells at $\{a_2, a_4, a_6\}$ consist of four 2-cells, as shown in Figure~\ref{fig:c6_egv1}, and the descending link is four edges forming a circle. 
\end{example}

We end this section with the following observation:
\begin{obs}\label{o:cone}
Given a vertex $\{u, v, w\}$ of $\Conf{3}{\G}$, if one vertex, say $u$, 
has exactly one edge to move down along in $\G$, 
and this motion is independent of any moves involving $v$ and $w$, then the descending link  of $\{u, v, w\}$ is a cone, with the cone point coming from the edge incident to $u$, and the base coming from descending cells which only involve the movement of $v$ and $w$.  
\end{obs}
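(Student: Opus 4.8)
The plan is to translate the geometric ``cone'' assertion into a purely combinatorial statement about the flag simplicial complex underlying the descending link, and then recognize that complex as a simplicial join. Recall from the discussion preceding this observation that the link of a vertex $\{u,v,w\}$ in $\Conf{3}{\G}$ is a flag complex whose vertices correspond to edges of $\G$ incident to one of $u, v, w$ (ending at a currently unoccupied vertex) and whose simplices correspond to collections of pairwise disjoint such edges. The descending link $\Lk_\downarrow(\{u,v,w\})$ is the full subcomplex spanned by those edges that move a particle \emph{down}; being a full subcomplex of a flag complex, it is itself flag, so a set of downward edges spans a simplex precisely when the edges are pairwise disjoint.

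First I would fix notation: let $D_u, D_v, D_w$ denote the sets of downward edges at $u, v, w$, so the vertex set of $\Lk_\downarrow(\{u,v,w\})$ is $D_u \sqcup D_v \sqcup D_w$. By hypothesis $D_u = \{e_u\}$ is a singleton, and the independence assumption says exactly that $e_u$ is disjoint from every edge of $D_v \cup D_w$. Let $B$ be the full subcomplex spanned by $D_v \cup D_w$; this records only the descending moves of $v$ and $w$, and is the intended ``base'' of the statement.

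The key step is then to verify that $\Lk_\downarrow(\{u,v,w\}) = \{e_u\} \ast B$, the simplicial join, which by definition is the cone on $B$ with cone point $e_u$. For the inclusion $\{e_u\} \ast B \subseteq \Lk_\downarrow$, I would take a simplex $\tau$ of $B$, i.e.\ a set of pairwise disjoint edges in $D_v \cup D_w$, and observe that $\{e_u\} \cup \tau$ is again pairwise disjoint since $e_u$ is disjoint from each edge of $\tau$ by independence; hence, by flagness, it spans a simplex (equivalently, it corresponds to a genuine descending cube). For the reverse inclusion, any simplex $\sigma$ of $\Lk_\downarrow$ is a set of pairwise disjoint downward edges, and since $D_u = \{e_u\}$ we have either $\sigma \subseteq D_v \cup D_w$, so $\sigma$ is a simplex of $B$, or $\sigma = \{e_u\} \cup \tau$ with $\tau$ a simplex of $B$; in either case $\sigma \in \{e_u\} \ast B$. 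This gives the desired equality.

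I do not anticipate a genuine obstacle: once the descending link is identified with the flag complex of pairwise-disjoint downward edges, the cone structure falls out formally from the fact that $u$ has a single downward edge that is disjoint from all the others. The only point requiring care is the precise reading of ``independent,'' which I take to mean that $e_u$ shares no endpoint with any downward edge at $v$ or $w$ (equivalently, the $1$-cube moving $u$ along $e_u$ is compatible with every descending cell supported on $v$ and $w$); this is exactly the condition that makes $e_u$ a valid cone vertex.
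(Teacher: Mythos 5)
Your proof is correct, and it is essentially the paper's reasoning made explicit: the paper states this as an Observation without proof, relying exactly on the fact (set up in Section 2) that the descending link is the flag complex on downward edges with adjacency given by disjointness, so a unique downward edge at $u$ disjoint from all downward edges at $v$ and $w$ cones off the rest. Your identification of the descending link with the join $\{e_u\} \ast B$ is the natural formalization of that implicit argument, and your reading of ``independent'' as disjointness of $e_u$ from every descending edge at $v$ and $w$ is the intended one.
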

\section{Sun Graphs} \label{sec:sun}
In this section, we show that the 3-strand braid group of a sun graph is free and compute the Euler characteristic of the corresponding configuration space in a special case. 

\begin{Definition} \label{def:sun}
The sun graph $\mathcal{S}(x_1,\ldots, x_n)$ is a $2n$-cycle with vertices labeled in order by $v_1,\ldots,v_{2n}$, where $v_{2i-1}$ has $x_i$ additional edges called \textit{rays} attached to it. 
The endpoints of those rays will be denoted by  $v_{2i-1,k}$. 
\end{Definition}

\begin{figure}
\begin{tikzpicture}[scale = 0.8]
\draw[thick] (0,-1) -- (2.5,1) -- (2.5,3) -- (0,5) -- (-2.5,3) --(-2.5,1) -- (0,-1);
\draw[fill=black] (0,-1) circle (3pt) node[below=0.1] { $v_7$};
\draw[fill=black] (2.5,1) circle (3pt)node[left=0.1] { $v_5$};
\draw[fill=black] (2.5,3) circle (3pt) node[left=0.05] { $v_3$};
\draw[fill=black] (0,5) circle (3pt) node[below=0.1] { $v_1$};
\draw[fill=black] (-2.5,3) circle (3pt) node[right=0.1] { $v_{11}$};
\draw[fill=black] (-2.5,1) circle (3pt) node[right=0.1] { $v_{9}$};

\draw[thick] (2.5,3) -- (2.8, 4);
\draw[fill=black] (2.8, 4) circle (3pt) ;

\draw (0,5) -- (-1,6);
\draw (0,5) -- (0,6);
\draw (0,5) -- (1,6);
\draw[fill=black] (-1,6) circle (3pt) ;
\draw[fill=black] (0,6) circle (3pt) ;
\draw[fill=black] (1,6) circle (3pt) ;

\draw[fill=black] (3.2,2) circle (3pt) ;
\draw[fill=black]  (3.9,2) circle (3pt) ;
\draw[fill=black]  (4.6,2) circle (3pt) ;
\draw (2.5,1) -- (3.2,2);
\draw (2.5,1) -- (3.9,2);
\draw (2.5,1) -- (4.6,2);

\draw[fill=black] (-.4, 0) circle (3pt) ;
\draw[fill=black] (0.4,0) circle (3pt) ;
\draw (0,-1) -- (-.4, 0);
\draw (0,-1) -- (0.4,0);

\draw[fill=black] (-3.3,4) circle (3pt) ;
\draw[fill=black] (-2.2,4) circle (3pt) ;
\draw (-2.5,3) -- (-2.2,4);
\draw (-2.5,3) -- (-3.3,4);
\end{tikzpicture}
    \caption{The sun graph $\mathcal{S}(3,1,3,2,0,2)$, where $n=6$. An admissible subdivision is obtained by subdividing each edge once (the even-labeled vertices are hence missing).
    For the inductive part of the proof of Theorem~\ref{t:sun}, we will attach a ray to the top vertex of the cycle.}     \label{SunGraph}
\end{figure}
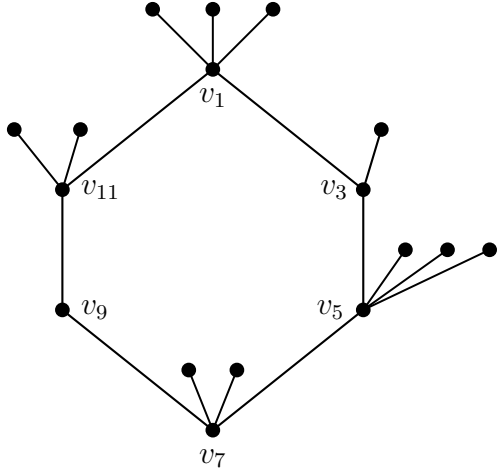

We now show that if $\mathcal{S}$ is a sun graph, then $B_3(\mathcal S)$ is free.  This proves Theorem~\ref{t:sun} of the introduction. We make the convention that if $v$ is a degree $2$ vertex of $\gG$, then its image under a Morse function $f: \gG \rightarrow \mathbb{R}$ will be the average of the image of its neighbors.

\begin{proof}[Proof of Theorem~\ref{t:sun}]
We will prove this via induction on the total number of rays, $\sum_{i=1}^{n}x_i$. 
In the base case, $\mathcal{S}(0,\ldots,0)$ is a cycle.   To get an admissible subdivision of a cycle, it is enough to cellulate it with 4 vertices, and we denote this by  $C_4$. 
Then $\text{Conf}_3^\square(C_4)$ is $C_4$ as well. Since $\text{Conf}_3^\square(C_4)$ is homotopy equivalent to $\text{Conf}_3(\mathcal{S}(0,\ldots,0))$, we have that $B_3(\mathcal{S}(0,\ldots,0))=\mathbb{Z}$.

For the inductive step, let $\Gamma=\mathcal{S}(x_1,\ldots,x_n)$ and $\Gamma'=\mathcal{S}(x_1+1,\ldots,x_n)$ and assume that  
$\Conf{3}{\Gamma}$ is homotopy equivalent to a wedge of circles (so
$B_3(\Gamma)$ is free). We will show that $B_3(\Gamma')$ is also free. Note that $\G'$ is obtained from $\G$ by adding a subdivided edge $e$ to $\Gamma$ as a new ray at $v_1$.  
We denote the degree $2$ vertex of this new edge $e$ by $v$ and the degree $1$ vertex by $v'$. 
 Let $h$ be the Morse function on $\Gamma$ determined by setting $h(v_i)=|i-n-1|$ and $h(v_{2i-1,k})=h(v_{2i-1})+1$. 
 See Figure~\ref{SunGraph} for an example of an embedding inducing $h$ when $n=6$.)  
 We extend $h$ to a Morse function on $\Gamma'$ by letting $h(v') = 5n+8$ (so $h(v) = 3n+4$ by our convention). 
Let $f$ be the Morse function on $\text{Conf}_3^\square(\Gamma')$ induced by $h$, as described in Section~\ref{sec:conf Morse}. 

Note that the maximum of $h$ on $\G$ is $n+1$, while $h(v')> h(v) = 3n+4$.   So given a vertex $\{u_1, u_2, u_3\}$ of $\Conf{3}{\Gamma'}$, we have $f(\{u_1, u_2, u_3\})> 3n+3$ if and only if $u_i \neq v$ or $v'$ for any $i$.  
Consequently, ${f}^{-1}([0,3n+3])$ is homotopy equivalent to $\text{Conf}_3(\Gamma)$, which by induction is homotopy equivalent to a wedge of circles.
Therefore, by Corollary~\ref{c:freeproduct}, to prove that $B_3(\Gamma')$ is free, it is enough to show that the descending links of vertices of $\Conf{3}{\Gamma'}$ of the form $\{v, u, w\}$ or $\{v', u,w\}$ are unions of contractible spaces.  

For $\{v', u,w\}$, if neither $u$ nor $w$ is $v$, there is a unique way for $v'$ to move down, independently of $u$ and $w$, so the descending link is a cone by Observation~\ref{o:cone}.  For $\{v', v, w\}$ the descending link  is 2 points if either $h(w)=n$, (so  $w=v_1$) or  $h(w)= n+1$
(so $w$ is adjacent to $v_1$  on a ray), 
and it is a cone otherwise (as in all other cases $v$ can move down independently of $w$ and $v'$).

Thus we can assume that the vertex  is of the form $\{v, u, w\}$ with neither $u$ nor $w$ equal to $v'$.  Assume $h(w) \le h(u)$.  If $0< h(w) <n$ then $w$ has a unique way of moving down, independently of $u$ and $v$, so the descending link is a cone.  If $h(w) = 0$, it is either two points (if $h(u) = n$ or $n+1$) or a cone.  If $h(w) = n$, then it is 
either two points (if $h(u) = n+1$) or a cone.
a suspension, as $w$ has two ways to move down independently of $u$ and $v$.  Finally, if $h(w) >n$, so that both $u$ and $w$ are on rays connected to $v_1$,  it is either two points (if $h(u)=h(w) = n+1$) and a cone otherwise.


Secondly, if $u = v_1$, then the descending link is either $2$ points (if $w$ is adjacent to $v_1$ or $w=v_{n+1}$ or a cone on 2 points (for all other $w$, by Observation~\ref{o:cone}).   
Finally, when neither $u$ nor $w$ is $v_1$ or on a ray attached to $v_1$, we see that $v$ can move down in exactly one way,  independently of $u$ and $w$, so the descending link is again a cone.

Therefore, in each case, the descending link is a union of contractible spaces, so $B_3(\Gamma')$ is free by 
Corollary~\ref{c:freeproduct}. 
\end{proof}

\section{Pulsar graphs}
\label{sec:pulsar}
In this section, we consider the 3-strand graph braid groups of pulsar graphs.  We compute the Euler characteristic of the corresponding configuration spaces in the special case of generalized theta graphs, and show explicitly that $\Conf{3}{\Theta_4}$ is homeomorphic to a closed, orientable surface of genus $3$ when $\Theta_4$ is given the simplest possible admissible subdivision.   Finally, we obtain a free-product decomposition for the 3-strand braid groups of general pulsar graphs.

\begin{Definition}\label{def:pulsar}
A \textit{pulsar graph}, denoted $\cP_{m, n_1, n_2}$, is obtained by gluing $m-1$ cycles along a fixed segment, and attaching $n_1$ and $n_2$ rays respectively to the two endpoints of this segment, see Figure~\ref{fig:theta_m} (right). In the  special case when $n_1=n_2=0$, we obtain a \emph{generalized theta graph}, denoted $\Theta_m$, see Figure~\ref{fig:theta_m} (left).  

An admissible subdivision on $\cP_{m, n_1, n_2}$ is obtained by adding a single degree 2 vertex to each edge, as shown in 
Figure~\ref{fig:theta_m}.  We  
 label the suspension vertices of $\Theta_m$ by $a_1$ and $a_2$ and the degree $2$ vertices of $\Theta_m$ by $b_1, \dots b_m$.  
 Finally, we label the vertices on the rays incident to $a_1$ and $a_2$ by $c_i, c_i'$ and $d_i, d_i'$ respectively, for $i$ in the appropriate range, as shown on the right in Figure~\ref{fig:theta_m}
  \end{Definition}

\begin{figure}
\tikzset{every picture/.style={line width=0.75pt}} 

\begin{tikzpicture}[x=0.75pt,y=0.75pt,yscale=-0.8,xscale=0.8]

\draw   (385.57,174.74) .. controls (385.57,125.67) and (415.26,85.88) .. (451.88,85.88) .. controls (488.51,85.88) and (518.2,125.67) .. (518.2,174.74) .. controls (518.2,223.82) and (488.51,263.6) .. (451.88,263.6) .. controls (415.26,263.6) and (385.57,223.82) .. (385.57,174.74) -- cycle ;
\draw    (451.88,85.88) .. controls (402.18,145.21) and (399.41,197.4) .. (451.88,263.6) ;
\draw    (451.88,85.88) .. controls (445.2,154.6) and (444.2,177.6) .. (451.88,263.6) ;
\draw  [fill={rgb, 255:red, 0; green, 0; blue, 0 }  ,fill opacity=1 ] (447.53,85.88) .. controls (447.53,83.48) and (449.48,81.52) .. (451.88,81.52) .. controls (454.29,81.52) and (456.24,83.48) .. (456.24,85.88) .. controls (456.24,88.29) and (454.29,90.24) .. (451.88,90.24) .. controls (449.48,90.24) and (447.53,88.29) .. (447.53,85.88) -- cycle ;
\draw  [fill={rgb, 255:red, 0; green, 0; blue, 0 }  ,fill opacity=1 ] (447.53,263.6) .. controls (447.53,261.19) and (449.48,259.24) .. (451.88,259.24) .. controls (454.29,259.24) and (456.24,261.19) .. (456.24,263.6) .. controls (456.24,266.01) and (454.29,267.96) .. (451.88,267.96) .. controls (449.48,267.96) and (447.53,266.01) .. (447.53,263.6) -- cycle ;
\draw  [fill={rgb, 255:red, 0; green, 0; blue, 0 }  ,fill opacity=1 ] (381.21,174.74) .. controls (381.21,172.33) and (383.16,170.38) .. (385.57,170.38) .. controls (387.98,170.38) and (389.93,172.33) .. (389.93,174.74) .. controls (389.93,177.15) and (387.98,179.1) .. (385.57,179.1) .. controls (383.16,179.1) and (381.21,177.15) .. (381.21,174.74) -- cycle ;
\draw  [fill={rgb, 255:red, 0; green, 0; blue, 0 }  ,fill opacity=1 ] (408.16,174.59) .. controls (408.16,172.19) and (410.11,170.24) .. (412.52,170.24) .. controls (414.92,170.24) and (416.87,172.19) .. (416.87,174.59) .. controls (416.87,177) and (414.92,178.95) .. (412.52,178.95) .. controls (410.11,178.95) and (408.16,177) .. (408.16,174.59) -- cycle ;
\draw  [fill={rgb, 255:red, 0; green, 0; blue, 0 }  ,fill opacity=1 ] (443.17,174.74) .. controls (443.17,172.33) and (445.12,170.38) .. (447.53,170.38) .. controls (449.93,170.38) and (451.88,172.33) .. (451.88,174.74) .. controls (451.88,177.15) and (449.93,179.1) .. (447.53,179.1) .. controls (445.12,179.1) and (443.17,177.15) .. (443.17,174.74) -- cycle ;
\draw  [fill={rgb, 255:red, 0; green, 0; blue, 0 }  ,fill opacity=1 ] (513.84,174.74) .. controls (513.84,172.33) and (515.79,170.38) .. (518.2,170.38) .. controls (520.61,170.38) and (522.56,172.33) .. (522.56,174.74) .. controls (522.56,177.15) and (520.61,179.1) .. (518.2,179.1) .. controls (515.79,179.1) and (513.84,177.15) .. (513.84,174.74) -- cycle ;
\draw  [fill={rgb, 255:red, 0; green, 0; blue, 0 }  ,fill opacity=1 ] (396.21,43.74) .. controls (396.21,41.33) and (398.16,39.38) .. (400.57,39.38) .. controls (402.98,39.38) and (404.93,41.33) .. (404.93,43.74) .. controls (404.93,46.15) and (402.98,48.1) .. (400.57,48.1) .. controls (398.16,48.1) and (396.21,46.15) .. (396.21,43.74) -- cycle ;
\draw  [fill={rgb, 255:red, 0; green, 0; blue, 0 }  ,fill opacity=1 ] (500.21,43.74) .. controls (500.21,41.33) and (502.16,39.38) .. (504.57,39.38) .. controls (506.98,39.38) and (508.93,41.33) .. (508.93,43.74) .. controls (508.93,46.15) and (506.98,48.1) .. (504.57,48.1) .. controls (502.16,48.1) and (500.21,46.15) .. (500.21,43.74) -- cycle ;
\draw    (400.57,43.74) -- (451.88,85.88) ;
\draw    (504.57,43.74) -- (451.88,85.88) ;

\draw   (129.57,174.75) .. controls (129.57,125.67) and (159.26,85.89) .. (195.88,85.89) .. controls (232.51,85.89) and (262.2,125.67) .. (262.2,174.75) .. controls (262.2,223.83) and (232.51,263.61) .. (195.88,263.61) .. controls (159.26,263.61) and (129.57,223.83) .. (129.57,174.75) -- cycle ;
\draw    (195.88,85.89) .. controls (146.18,145.22) and (143.41,197.41) .. (195.88,263.61) ;
\draw    (195.88,85.89) .. controls (189.2,154.61) and (188.2,177.61) .. (195.88,263.61) ;
\draw  [fill={rgb, 255:red, 0; green, 0; blue, 0 }  ,fill opacity=1 ] (191.53,85.89) .. controls (191.53,83.48) and (193.48,81.53) .. (195.88,81.53) .. controls (198.29,81.53) and (200.24,83.48) .. (200.24,85.89) .. controls (200.24,88.3) and (198.29,90.25) .. (195.88,90.25) .. controls (193.48,90.25) and (191.53,88.3) .. (191.53,85.89) -- cycle ;
\draw  [fill={rgb, 255:red, 0; green, 0; blue, 0 }  ,fill opacity=1 ] (191.53,263.61) .. controls (191.53,261.2) and (193.48,259.25) .. (195.88,259.25) .. controls (198.29,259.25) and (200.24,261.2) .. (200.24,263.61) .. controls (200.24,266.02) and (198.29,267.97) .. (195.88,267.97) .. controls (193.48,267.97) and (191.53,266.02) .. (191.53,263.61) -- cycle ;
\draw  [fill={rgb, 255:red, 0; green, 0; blue, 0 }  ,fill opacity=1 ] (125.21,174.75) .. controls (125.21,172.34) and (127.16,170.39) .. (129.57,170.39) .. controls (131.98,170.39) and (133.93,172.34) .. (133.93,174.75) .. controls (133.93,177.16) and (131.98,179.11) .. (129.57,179.11) .. controls (127.16,179.11) and (125.21,177.16) .. (125.21,174.75) -- cycle ;
\draw  [fill={rgb, 255:red, 0; green, 0; blue, 0 }  ,fill opacity=1 ] (152.16,174.6) .. controls (152.16,172.2) and (154.11,170.24) .. (156.52,170.24) .. controls (158.92,170.24) and (160.87,172.2) .. (160.87,174.6) .. controls (160.87,177.01) and (158.92,178.96) .. (156.52,178.96) .. controls (154.11,178.96) and (152.16,177.01) .. (152.16,174.6) -- cycle ;
\draw  [fill={rgb, 255:red, 0; green, 0; blue, 0 }  ,fill opacity=1 ] (187.17,174.75) .. controls (187.17,172.34) and (189.12,170.39) .. (191.53,170.39) .. controls (193.93,170.39) and (195.88,172.34) .. (195.88,174.75) .. controls (195.88,177.16) and (193.93,179.11) .. (191.53,179.11) .. controls (189.12,179.11) and (187.17,177.16) .. (187.17,174.75) -- cycle ;
\draw  [fill={rgb, 255:red, 0; green, 0; blue, 0 }  ,fill opacity=1 ] (257.84,174.75) .. controls (257.84,172.34) and (259.79,170.39) .. (262.2,170.39) .. controls (264.61,170.39) and (266.56,172.34) .. (266.56,174.75) .. controls (266.56,177.16) and (264.61,179.11) .. (262.2,179.11) .. controls (259.79,179.11) and (257.84,177.16) .. (257.84,174.75) -- cycle ;

\draw (212.24,172.4) node [anchor=north west][inner sep=0.75pt]    {$\cdots $};
\draw (197.88,98.65) node [anchor=north west][inner sep=0.75pt]    {$a_{1}$};
\draw (198.53,245.01) node [anchor=north west][inner sep=0.75pt]    {$a_{2}$};
\draw (105,172.41) node [anchor=north west][inner sep=0.75pt]    {$b_{1}$};
\draw (137,172.41) node [anchor=north west][inner sep=0.75pt]    {$b_{2}$};
\draw (169,172.41) node [anchor=north west][inner sep=0.75pt]    {$b_{3}$};
\draw (273,172.41) node [anchor=north west][inner sep=0.75pt]    {$b_{m}$};
\draw (470.24,172.4) node [anchor=north west][inner sep=0.75pt]    {$\cdots $};
\draw (453.88,98.64) node [anchor=north west][inner sep=0.75pt]    {$a_{1}$};
\draw (454.53,245) node [anchor=north west][inner sep=0.75pt]    {$a_{2}$};
\draw (361,172.4) node [anchor=north west][inner sep=0.75pt]    {$b_{1}$};
\draw (393,172.4) node [anchor=north west][inner sep=0.75pt]    {$b_{2}$};
\draw (425,172.4) node [anchor=north west][inner sep=0.75pt]    {$b_{3}$};
\draw (525,172.4) node [anchor=north west][inner sep=0.5pt]    {$b_{m}$};

\draw (560,172.4) node [anchor=north west][inner sep=0.75pt]    {$d'_1$};

\draw (620,172.4) node [anchor=north west][inner sep=0.75pt]    {$d'_{n_2}$};
\draw (454,265) .. controls (525,260) and (560, 190) .. (560,172.4);
\draw (454,265) .. controls (575,260) and (620, 210) .. (620,172.4);

\fill[black] (560,174) circle (4.85);
\draw (580, 174) node [anchor=north west][inner sep=0.75pt]    {$\dotsc $};
\fill[black] (620,174) circle (4.85);

\fill[black] (535,222) circle (4.85);
\draw (535.5,222.5) node [anchor=north west][inner sep=0.75pt]    {$d_1$};

\fill[black] (594,222) circle (4.85);
\draw (594.5,222.5) node [anchor=north west][inner sep=0.75pt]    {$d_{n_2}$};
\fill[black] (425, 65) circle (4.85);
\draw (425.6, 65.4) node [anchor= north east][inner sep=0.75pt]    {$c_1$};
\fill[black] (478,65) circle (4.85);
\draw (478.6 ,65.4) node [anchor=north west][inner sep=0.75pt]    {$c_{n_1}$};

\draw (425,39.4) node [anchor=north west][inner sep=0.75pt]    {$\dotsc \dotsc $};
\draw (374,34) node [anchor=north west][inner sep=0.75pt]    {$c'_{1}$};
\draw (510, 34) node [anchor=north west][inner sep=0.75pt]    {$c'_{n_{1}}$};

\end{tikzpicture}
    \caption{The generalized theta graph $\Theta_m$ and the pulsar graph $\cP_{m, n_1, n_2}$.}
    \label{fig:theta_m}
    \end{figure}
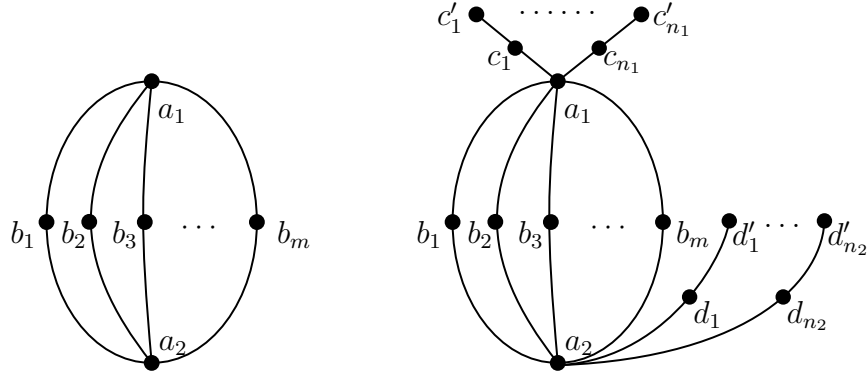

\subsection{Euler characteristics}
We now calculate the  
Euler characteristic of $\cConf_3(\Theta_m)$.

Since the number of vertices in $\Theta_m$ is $m+2$, the number of vertices in $\Conf{3}{\Theta_m}$  is 
$\binom{m+2}{3}$.  Similarly, there are $2m$ edges in $\Theta_m$, so the number of edges in $\Conf{3}{\Theta_m}$ is 
$(2m)\binom{m}{2}$.  Finally, the number of pairs of disjoint edges in $\Theta_m$ is 
$ m^2 - m$, so the number of  2-cells in $\Conf{3}{\Theta_m}$  is 
$( m^2 - m)(m-2)$.  As $\Theta_m$ has no triples of disjoint edges, $\Conf{3}{\Theta_m}$ has no 3-cells. 

Therefore we have (after simplifying):
\begin{equation}\label{eq:chitheta}
\chi(\cConf_3(\Theta_m))=
\frac{m(m-2)(m-7)}{6}.
\end{equation}

In particular, $\chi(\cConf_3({\Theta_7})) =0$ and $\chi(\cConf_3{\Theta_m}) >0$ when $m>7$.

\subsection{The case of $\Theta_4$}

We now show that $\cConf^\square_3(\Theta_4)$ is a closed surface by showing a neighborhood of each vertex is a closed 2-dimensional disk. 
There are three types of vertices, namely $\{b_i,b_j,b_k\}$, $\{a_i,b_j,b_k\}$, and $\{a_1,a_2,b_i\}$ (in the notation of Figure~\ref{fig:theta_m}) and their neighborhoods are shown in Figures~\ref{fig:case1} and~\ref{fig:case3}.

    \begin{figure}
\tikzset{every picture/.style={line width=0.75pt}} 

\begin{tikzpicture}[x=0.75pt,y=0.75pt,yscale=-1,xscale=1]

\draw  [color={rgb, 255:red, 0; green, 0; blue, 0 }  ,draw opacity=1 ] (189.6,70.12) -- (221.5,136.16) -- (300.1,128.36) -- (253.4,186.59) -- (300.1,244.82) -- (221.5,237.02) -- (189.6,303.06) -- (157.7,237.02) -- (79.1,244.82) -- (125.8,186.59) -- (79.1,128.36) -- (157.7,136.16) -- cycle ;
\draw [color={rgb, 255:red, 0; green, 0; blue, 0 }  ,draw opacity=1 ]   (157.7,136.16) -- (221.5,237.02) ;
\draw [color={rgb, 255:red, 0; green, 0; blue, 0 }  ,draw opacity=1 ]   (221.5,136.16) -- (157.7,237.02) ;
\draw [color={rgb, 255:red, 0; green, 0; blue, 0 }  ,draw opacity=1 ]   (125.8,186.59) -- (253.4,186.59) ;
\draw  [color={rgb, 255:red, 208; green, 2; blue, 27 }  ,draw opacity=1 ][fill={rgb, 255:red, 208; green, 2; blue, 27 }  ,fill opacity=1 ] (186.31,186.59) .. controls (186.31,184.77) and (187.78,183.3) .. (189.6,183.3) .. controls (191.42,183.3) and (192.89,184.77) .. (192.89,186.59) .. controls (192.89,188.41) and (191.42,189.88) .. (189.6,189.88) .. controls (187.78,189.88) and (186.31,188.41) .. (186.31,186.59) -- cycle ;
\draw  [color={rgb, 255:red, 0; green, 0; blue, 0 }  ,draw opacity=1 ][fill={rgb, 255:red, 0; green, 0; blue, 0 }  ,fill opacity=1 ] (186.31,70.12) .. controls (186.31,68.3) and (187.78,66.83) .. (189.6,66.83) .. controls (191.42,66.83) and (192.89,68.3) .. (192.89,70.12) .. controls (192.89,71.94) and (191.42,73.41) .. (189.6,73.41) .. controls (187.78,73.41) and (186.31,71.94) .. (186.31,70.12) -- cycle ;
\draw  [color={rgb, 255:red, 0; green, 0; blue, 0 }  ,draw opacity=1 ][fill={rgb, 255:red, 0; green, 0; blue, 0 }  ,fill opacity=1 ] (218.21,136.16) .. controls (218.21,134.34) and (219.68,132.87) .. (221.5,132.87) .. controls (223.32,132.87) and (224.79,134.34) .. (224.79,136.16) .. controls (224.79,137.98) and (223.32,139.45) .. (221.5,139.45) .. controls (219.68,139.45) and (218.21,137.98) .. (218.21,136.16) -- cycle ;
\draw  [color={rgb, 255:red, 0; green, 0; blue, 0 }  ,draw opacity=1 ][fill={rgb, 255:red, 0; green, 0; blue, 0 }  ,fill opacity=1 ] (296.81,128.36) .. controls (296.81,126.54) and (298.29,125.06) .. (300.1,125.06) .. controls (301.92,125.06) and (303.4,126.54) .. (303.4,128.36) .. controls (303.4,130.17) and (301.92,131.65) .. (300.1,131.65) .. controls (298.29,131.65) and (296.81,130.17) .. (296.81,128.36) -- cycle ;
\draw  [color={rgb, 255:red, 0; green, 0; blue, 0 }  ,draw opacity=1 ][fill={rgb, 255:red, 0; green, 0; blue, 0 }  ,fill opacity=1 ] (250.11,186.59) .. controls (250.11,184.77) and (251.58,183.3) .. (253.4,183.3) .. controls (255.22,183.3) and (256.69,184.77) .. (256.69,186.59) .. controls (256.69,188.41) and (255.22,189.88) .. (253.4,189.88) .. controls (251.58,189.88) and (250.11,188.41) .. (250.11,186.59) -- cycle ;
\draw  [color={rgb, 255:red, 0; green, 0; blue, 0 }  ,draw opacity=1 ][fill={rgb, 255:red, 0; green, 0; blue, 0 }  ,fill opacity=1 ] (296.81,244.82) .. controls (296.81,243.01) and (298.29,241.53) .. (300.1,241.53) .. controls (301.92,241.53) and (303.4,243.01) .. (303.4,244.82) .. controls (303.4,246.64) and (301.92,248.12) .. (300.1,248.12) .. controls (298.29,248.12) and (296.81,246.64) .. (296.81,244.82) -- cycle ;
\draw  [color={rgb, 255:red, 0; green, 0; blue, 0 }  ,draw opacity=1 ][fill={rgb, 255:red, 0; green, 0; blue, 0 }  ,fill opacity=1 ] (218.21,237.02) .. controls (218.21,235.21) and (219.68,233.73) .. (221.5,233.73) .. controls (223.32,233.73) and (224.79,235.21) .. (224.79,237.02) .. controls (224.79,238.84) and (223.32,240.31) .. (221.5,240.31) .. controls (219.68,240.31) and (218.21,238.84) .. (218.21,237.02) -- cycle ;
\draw  [color={rgb, 255:red, 0; green, 0; blue, 0 }  ,draw opacity=1 ][fill={rgb, 255:red, 0; green, 0; blue, 0 }  ,fill opacity=1 ] (186.31,303.06) .. controls (186.31,301.24) and (187.78,299.77) .. (189.6,299.77) .. controls (191.42,299.77) and (192.89,301.24) .. (192.89,303.06) .. controls (192.89,304.88) and (191.42,306.35) .. (189.6,306.35) .. controls (187.78,306.35) and (186.31,304.88) .. (186.31,303.06) -- cycle ;
\draw  [color={rgb, 255:red, 0; green, 0; blue, 0 }  ,draw opacity=1 ][fill={rgb, 255:red, 0; green, 0; blue, 0 }  ,fill opacity=1 ] (154.41,237.02) .. controls (154.41,235.21) and (155.88,233.73) .. (157.7,233.73) .. controls (159.52,233.73) and (160.99,235.21) .. (160.99,237.02) .. controls (160.99,238.84) and (159.52,240.31) .. (157.7,240.31) .. controls (155.88,240.31) and (154.41,238.84) .. (154.41,237.02) -- cycle ;
\draw  [color={rgb, 255:red, 0; green, 0; blue, 0 }  ,draw opacity=1 ][fill={rgb, 255:red, 0; green, 0; blue, 0 }  ,fill opacity=1 ] (154.41,136.16) .. controls (154.41,134.34) and (155.88,132.87) .. (157.7,132.87) .. controls (159.52,132.87) and (160.99,134.34) .. (160.99,136.16) .. controls (160.99,137.98) and (159.52,139.45) .. (157.7,139.45) .. controls (155.88,139.45) and (154.41,137.98) .. (154.41,136.16) -- cycle ;
\draw  [color={rgb, 255:red, 0; green, 0; blue, 0 }  ,draw opacity=1 ][fill={rgb, 255:red, 0; green, 0; blue, 0 }  ,fill opacity=1 ] (75.8,128.36) .. controls (75.8,126.54) and (77.28,125.06) .. (79.1,125.06) .. controls (80.91,125.06) and (82.39,126.54) .. (82.39,128.36) .. controls (82.39,130.17) and (80.91,131.65) .. (79.1,131.65) .. controls (77.28,131.65) and (75.8,130.17) .. (75.8,128.36) -- cycle ;
\draw  [color={rgb, 255:red, 0; green, 0; blue, 0 }  ,draw opacity=1 ][fill={rgb, 255:red, 0; green, 0; blue, 0 }  ,fill opacity=1 ] (122.51,186.59) .. controls (122.51,184.77) and (123.98,183.3) .. (125.8,183.3) .. controls (127.62,183.3) and (129.09,184.77) .. (129.09,186.59) .. controls (129.09,188.41) and (127.62,189.88) .. (125.8,189.88) .. controls (123.98,189.88) and (122.51,188.41) .. (122.51,186.59) -- cycle ;
\draw  [color={rgb, 255:red, 0; green, 0; blue, 0 }  ,draw opacity=1 ][fill={rgb, 255:red, 0; green, 0; blue, 0 }  ,fill opacity=1 ] (75.8,244.82) .. controls (75.8,243.01) and (77.28,241.53) .. (79.1,241.53) .. controls (80.91,241.53) and (82.39,243.01) .. (82.39,244.82) .. controls (82.39,246.64) and (80.91,248.12) .. (79.1,248.12) .. controls (77.28,248.12) and (75.8,246.64) .. (75.8,244.82) -- cycle ;

\draw  [color={rgb, 255:red, 0; green, 0; blue, 0 }  ,draw opacity=1 ] (485.6,66.12) -- (517.5,132.16) -- (596.1,124.36) -- (549.4,182.59) -- (596.1,240.82) -- (517.5,233.02) -- (485.6,299.06) -- (453.7,233.02) -- (375.1,240.82) -- (421.8,182.59) -- (375.1,124.36) -- (453.7,132.16) -- cycle ;
\draw [color={rgb, 255:red, 0; green, 0; blue, 0 }  ,draw opacity=1 ]   (453.7,132.16) -- (517.5,233.02) ;
\draw [color={rgb, 255:red, 0; green, 0; blue, 0 }  ,draw opacity=1 ]   (517.5,132.16) -- (453.7,233.02) ;
\draw [color={rgb, 255:red, 0; green, 0; blue, 0 }  ,draw opacity=1 ]   (421.8,182.59) -- (549.4,182.59) ;
\draw  [color={rgb, 255:red, 208; green, 2; blue, 27 }  ,draw opacity=1 ][fill={rgb, 255:red, 208; green, 2; blue, 27 }  ,fill opacity=1 ] (482.31,182.59) .. controls (482.31,180.77) and (483.78,179.3) .. (485.6,179.3) .. controls (487.42,179.3) and (488.89,180.77) .. (488.89,182.59) .. controls (488.89,184.41) and (487.42,185.88) .. (485.6,185.88) .. controls (483.78,185.88) and (482.31,184.41) .. (482.31,182.59) -- cycle ;
\draw  [color={rgb, 255:red, 0; green, 0; blue, 0 }  ,draw opacity=1 ][fill={rgb, 255:red, 0; green, 0; blue, 0 }  ,fill opacity=1 ] (482.31,66.12) .. controls (482.31,64.3) and (483.78,62.83) .. (485.6,62.83) .. controls (487.42,62.83) and (488.89,64.3) .. (488.89,66.12) .. controls (488.89,67.94) and (487.42,69.41) .. (485.6,69.41) .. controls (483.78,69.41) and (482.31,67.94) .. (482.31,66.12) -- cycle ;
\draw  [color={rgb, 255:red, 0; green, 0; blue, 0 }  ,draw opacity=1 ][fill={rgb, 255:red, 0; green, 0; blue, 0 }  ,fill opacity=1 ] (514.21,132.16) .. controls (514.21,130.34) and (515.68,128.87) .. (517.5,128.87) .. controls (519.32,128.87) and (520.79,130.34) .. (520.79,132.16) .. controls (520.79,133.98) and (519.32,135.45) .. (517.5,135.45) .. controls (515.68,135.45) and (514.21,133.98) .. (514.21,132.16) -- cycle ;
\draw  [color={rgb, 255:red, 0; green, 0; blue, 0 }  ,draw opacity=1 ][fill={rgb, 255:red, 0; green, 0; blue, 0 }  ,fill opacity=1 ] (592.81,124.36) .. controls (592.81,122.54) and (594.29,121.06) .. (596.1,121.06) .. controls (597.92,121.06) and (599.4,122.54) .. (599.4,124.36) .. controls (599.4,126.17) and (597.92,127.65) .. (596.1,127.65) .. controls (594.29,127.65) and (592.81,126.17) .. (592.81,124.36) -- cycle ;
\draw  [color={rgb, 255:red, 0; green, 0; blue, 0 }  ,draw opacity=1 ][fill={rgb, 255:red, 0; green, 0; blue, 0 }  ,fill opacity=1 ] (546.11,182.59) .. controls (546.11,180.77) and (547.58,179.3) .. (549.4,179.3) .. controls (551.22,179.3) and (552.69,180.77) .. (552.69,182.59) .. controls (552.69,184.41) and (551.22,185.88) .. (549.4,185.88) .. controls (547.58,185.88) and (546.11,184.41) .. (546.11,182.59) -- cycle ;
\draw  [color={rgb, 255:red, 0; green, 0; blue, 0 }  ,draw opacity=1 ][fill={rgb, 255:red, 0; green, 0; blue, 0 }  ,fill opacity=1 ] (592.81,240.82) .. controls (592.81,239.01) and (594.29,237.53) .. (596.1,237.53) .. controls (597.92,237.53) and (599.4,239.01) .. (599.4,240.82) .. controls (599.4,242.64) and (597.92,244.12) .. (596.1,244.12) .. controls (594.29,244.12) and (592.81,242.64) .. (592.81,240.82) -- cycle ;
\draw  [color={rgb, 255:red, 0; green, 0; blue, 0 }  ,draw opacity=1 ][fill={rgb, 255:red, 0; green, 0; blue, 0 }  ,fill opacity=1 ] (514.21,233.02) .. controls (514.21,231.21) and (515.68,229.73) .. (517.5,229.73) .. controls (519.32,229.73) and (520.79,231.21) .. (520.79,233.02) .. controls (520.79,234.84) and (519.32,236.31) .. (517.5,236.31) .. controls (515.68,236.31) and (514.21,234.84) .. (514.21,233.02) -- cycle ;
\draw  [color={rgb, 255:red, 0; green, 0; blue, 0 }  ,draw opacity=1 ][fill={rgb, 255:red, 0; green, 0; blue, 0 }  ,fill opacity=1 ] (482.31,299.06) .. controls (482.31,297.24) and (483.78,295.77) .. (485.6,295.77) .. controls (487.42,295.77) and (488.89,297.24) .. (488.89,299.06) .. controls (488.89,300.88) and (487.42,302.35) .. (485.6,302.35) .. controls (483.78,302.35) and (482.31,300.88) .. (482.31,299.06) -- cycle ;
\draw  [color={rgb, 255:red, 0; green, 0; blue, 0 }  ,draw opacity=1 ][fill={rgb, 255:red, 0; green, 0; blue, 0 }  ,fill opacity=1 ] (450.41,233.02) .. controls (450.41,231.21) and (451.88,229.73) .. (453.7,229.73) .. controls (455.52,229.73) and (456.99,231.21) .. (456.99,233.02) .. controls (456.99,234.84) and (455.52,236.31) .. (453.7,236.31) .. controls (451.88,236.31) and (450.41,234.84) .. (450.41,233.02) -- cycle ;
\draw  [color={rgb, 255:red, 0; green, 0; blue, 0 }  ,draw opacity=1 ][fill={rgb, 255:red, 0; green, 0; blue, 0 }  ,fill opacity=1 ] (450.41,132.16) .. controls (450.41,130.34) and (451.88,128.87) .. (453.7,128.87) .. controls (455.52,128.87) and (456.99,130.34) .. (456.99,132.16) .. controls (456.99,133.98) and (455.52,135.45) .. (453.7,135.45) .. controls (451.88,135.45) and (450.41,133.98) .. (450.41,132.16) -- cycle ;
\draw  [color={rgb, 255:red, 0; green, 0; blue, 0 }  ,draw opacity=1 ][fill={rgb, 255:red, 0; green, 0; blue, 0 }  ,fill opacity=1 ] (371.8,124.36) .. controls (371.8,122.54) and (373.28,121.06) .. (375.1,121.06) .. controls (376.91,121.06) and (378.39,122.54) .. (378.39,124.36) .. controls (378.39,126.17) and (376.91,127.65) .. (375.1,127.65) .. controls (373.28,127.65) and (371.8,126.17) .. (371.8,124.36) -- cycle ;
\draw  [color={rgb, 255:red, 0; green, 0; blue, 0 }  ,draw opacity=1 ][fill={rgb, 255:red, 0; green, 0; blue, 0 }  ,fill opacity=1 ] (418.51,182.59) .. controls (418.51,180.77) and (419.98,179.3) .. (421.8,179.3) .. controls (423.62,179.3) and (425.09,180.77) .. (425.09,182.59) .. controls (425.09,184.41) and (423.62,185.88) .. (421.8,185.88) .. controls (419.98,185.88) and (418.51,184.41) .. (418.51,182.59) -- cycle ;
\draw  [color={rgb, 255:red, 0; green, 0; blue, 0 }  ,draw opacity=1 ][fill={rgb, 255:red, 0; green, 0; blue, 0 }  ,fill opacity=1 ] (371.8,240.82) .. controls (371.8,239.01) and (373.28,237.53) .. (375.1,237.53) .. controls (376.91,237.53) and (378.39,239.01) .. (378.39,240.82) .. controls (378.39,242.64) and (376.91,244.12) .. (375.1,244.12) .. controls (373.28,244.12) and (371.8,242.64) .. (371.8,240.82) -- cycle ;

\draw (498.02,166.22) node [anchor=north west][inner sep=0.75pt]  [color={rgb, 255:red, 208; green, 2; blue, 27 }  ,opacity=1 ]  {$a_{1} a_{2} b_{i}$};
\draw (560.38,176.72) node [anchor=north west][inner sep=0.75pt]    {$b_{k} a_{2} b_{i}$};
\draw (370.76,178.72) node [anchor=north west][inner sep=0.75pt]    {$a_{1} b_{k} b_{i}$};
\draw (411.78,112.36) node [anchor=north west][inner sep=0.75pt]    {$b_{j} a_{2} b_{i}$};
\draw (519.5,244.71) node [anchor=north west][inner sep=0.75pt]    {$a_{1} b_{j} b_{i}$};
\draw (515.56,111.36) node [anchor=north west][inner sep=0.75pt]    {$a_{1} b_{l} b_{i}$};
\draw (416.28,245.82) node [anchor=north west][inner sep=0.75pt]    {$b_{l} a_{2} b_{i}$};
\draw (202.02,170.22) node [anchor=north west][inner sep=0.75pt]  [color={rgb, 255:red, 208; green, 2; blue, 27 }  ,opacity=1 ]  {$b_{i} b_{j} b_{k}$};
\draw (264.38,180.72) node [anchor=north west][inner sep=0.75pt]    {$a_{1} b_{j} b_{k}$};
\draw (74.76,182.72) node [anchor=north west][inner sep=0.75pt]    {$a_{2} b_{j} b_{k}$};
\draw (115.78,116.36) node [anchor=north west][inner sep=0.75pt]    {$b_{i} a_{1} b_{k}$};
\draw (223.5,248.71) node [anchor=north west][inner sep=0.75pt]    {$b_{i} a_{2} b_{k}$};
\draw (219.56,115.36) node [anchor=north west][inner sep=0.75pt]    {$b_{i} b_{j} a_{2}$};
\draw (120.28,249.82) node [anchor=north west][inner sep=0.75pt]    {$b_{i} b_{j} a_{1}$};

\end{tikzpicture}
    \caption{Local neighborhood of $\{b_i,b_j,b_k\}$ and $\{a_ib_jb_k\}$ in $\cConf^\square_3(\Theta_4)$.}
    \label{fig:case1}
    \end{figure}
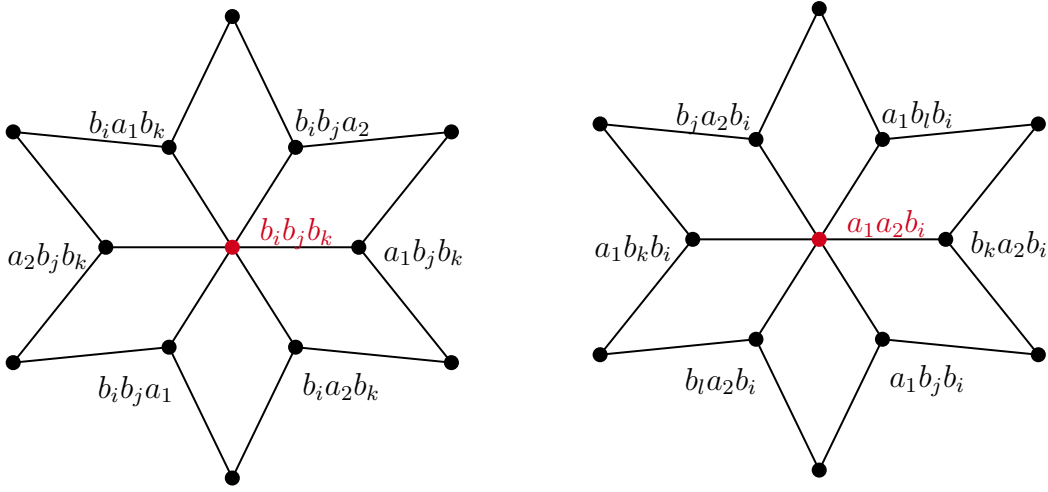

    \begin{figure}
 \begin{tikzpicture}[xscale=0.6, yscale=0.6]

\fill[black] (0,3) circle (.13);
\fill[black] (3,0) circle (.13);
\fill[black] (3,3) circle (.13);
\fill[black] (0,-3) circle (.13);
\fill[black] (-3,0) circle (.13);
\fill[black] (-3,-3) circle (.13);
\fill[black] (3,-3) circle (.13);
\fill[black] (-3,3) circle (.13);
\draw[thick] (-3,3) -- (3, 3);
\draw[thick] (-3,3) -- (-3, -3);
\draw[thick] (0,3) -- (0, -3);
\draw[thick] (3,0) -- (-3, 0);
\draw[thick] (3,3) -- (3, -3);
\draw[thick] (-3,-3) -- (3, -3);
\fill[red] (0,0) circle (.13);
\node at (0.95,-.45) {$a_1b_ib_j$};
\node at (4.2,0) {$a_1a_2b_j$};
\node at (-4.1,0) {$a_1a_2b_i$};
\node at (0, 3.6) {$b_ib_jb_k$};
\node at (0, -3.6) {$b_ib_jb_l$};
\end{tikzpicture}

    \caption{Local neighborhood of $\{a_i, b_j, b_k\}$ in $\cConf^\square_3(\Theta_4)$.}
    \label{fig:case3}
    \end{figure}
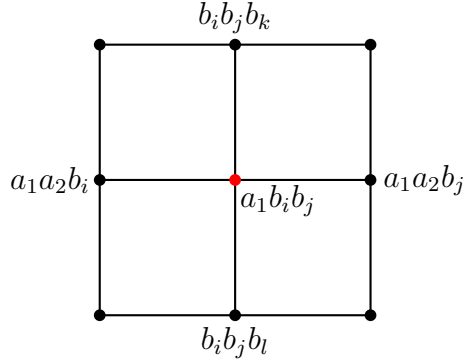

By the formula for Euler characteristic in~\eqref{eq:chitheta}, 
we have $\chi(\cConf^\square_3(\Theta_4))=-4$.  
As the Euler characteristic is even, the surface is orientable.  Hence $\cConf^\square_3(\Theta_4)$ is a closed, orientable surface of genus $3$.

\subsection{Free product decomposition}
In this section we show that if $\mathcal{P}_{m,n_1,n_2}$ is the pulsar graph from Definition~\ref{def:pulsar}, then the group $B_3(\mathcal{P}_{m,n_1,n_2})$ decomposes as a free product of the form $ B_3(\Theta_m)*\mathbb {F}$, where $\mathbb {F}$ is a free group. 
This proves Thoerem~\ref{t:split} of the introduction. 

\begin{proof}
   Let $m$ be fixed and consider the admissible subdivision on $\cP_{m,n_1,n_2}$ from Definition~\ref{def:pulsar}.  (See Figure~\ref{fig:theta_m}.)  
        Let us first assume that $n_2 = 0$. 
        
        Let    $h: \mathcal{P}_{m,n_1,0} \rightarrow \mathbb{R}$ be the function defined by 
          \[h(v)=\begin{cases}
        0, & v=a_2\\
        1,& v=b_j  \text{ for }1\leq  j \leq m\\
        2,& v=a_1\\
        5 &v=c_{i} \text{ for }1\leq  i \leq n_1\\
        6 &v=c'_{i} \text{ for }1\leq i\leq n_1
    \end{cases}.\] 
Let $f$ be the induced Morse function on $\Conf{3}{\mathcal{P}_{m,n_1,0}}$.  Observe that for a vertex $\{u, v, w\}$ of $\Conf{3}{\mathcal{P}_{m,n_1,0}}$, we have $f(\{u, v, w\}) > 4$ if and only if at least one of $u, v,$ and $w$ is equal to $c_i$ or $c_i'$ for some $i$.   In particular, $f^{-1}([0,4])$ is equal to
   $\Conf{3}{\Theta_m}$, which is connected.   With a view to applying Corollary~\ref{c:freeproduct}, we analyze the descending links of vertices in $\Conf{3}{\mathcal{P}_{m,n_1,0}}$ involving at least one vertex of the form $c_{i}$ or $c_i'$.

  First consider a vertex of the form $\{c'_{i}, v,w\}$.  If neither $v$ nor $w$ is $c_i$, then the descending link is a cone by Observation~\ref{o:cone}.  If $v = c_i$, then the descending link is 2 points if either $w = a_1$ or $w= c_j$ for some $j \neq i$, and is an edge for all other $w$.  
  
  Next, we consider vertices of the form $\{c_{i}, v, w\}$ where neither $v$ nor $w$ is equal $c_j'$ for any $j$, and suppose $h(w) \le h(v)$.  If $h(v) \le 1$, then $c_i$ has a unique way to move down, and the descending link is a cone.  Suppose 
  $h(v)= 2$ (equivalently $v= a_1$).  Then $w$ has a unique way to move down unless $h(w)=0$ (equivalently $w = a_2$), so the descending link is either a cone or (if $w = a_2$) a union of $m$-points.  
   If the vertex is of the form $\{c_{i}, c_{j}, w\}$, where $h(w) \le 2$ 
   then its descending link is either $m$ points (if $w = a_1)$ or the suspension of $0$ or $1$ points. 
   Finally, a vertex of the form $ \{c_{i}, c_{j}, c_{l}\}$ has descending link equal to $3$ points. 

From the above discussion, if at least one of $u, v,$ and $w$ is equal to $c_i$ or $c_i'$ for some $i$, then the descending link of 
$\{u, v, w\}$ is a union of contractible spaces.  Then Corollary~\ref{c:freeproduct} implies that 
  that $\Conf{3}{\mathcal{P}_{m,n_1,0}}$ is homotopy equivalent to a wedge of $\Conf{3}{\Theta_m}$ with a collection of circles, and the result follows (in the case $n_2=0$).  
  
  Now we assume that $n_2 > 0$, and 
   extend the Morse function $h$ above to $h:\mathcal{P}_{m,n_1,n_2}\to \mathbb{R}$ by setting $h(d_{i})= 0.5$ and $h(d_i')=1$  for all $i$ (see Figure \ref{fig:theta_m}).
Again, let $f$ denote the induced Morse function on $\Conf{3}{\mathcal{P}_{m,n_1,n_2}}$. 
Note that $f^{-1}([0,4])$ is homotopy equivalent to $\Conf{3}{\mathcal{P}_{m,0 ,n_2}}$, which, by the above argument applied to $\mathcal{P}_{m,0 ,n_2}$, is homotopy equivalent to a wedge of $\Conf{3}{\Theta_m}$ with a collection of circles.
Thus again we only need 
    to understand the descending links of vertices in $\Conf{3}{\mathcal{P}_{m,n_1,n_2}}$ involving at least one vertex of the form $c_{i}$ or $c_i'$. 
The analysis above applies verbatim to this case and shows that the descending links are all unions of contractible spaces. 
\end{proof}

\bibliographystyle{amsalpha}
\bibliography{refs}

\end{document}